\newtheorem{theorem}{Theorem}[section]
\newtheorem{lemma}[theorem]{Lemma}
\newtheorem{proposition}[theorem]{Proposition}
\newtheorem{definition}[theorem]{Definition}
\newtheorem{conjecture}[theorem]{Conjecture}
\newtheorem{remark}{Remark}
\begin{document}

\begin{center}
 {\bf  REFINEMENT EQUATIONS AND SPLINE FUNCTIONS}
\end{center}

\begin{center}
{Art\={u}ras Dubickas}

\vspace{0.3cm} {\small Department of Mathematics and Informatics,
Vilnius University,

Naugarduko 24, Vilnius LT-03225, Lithuania}

{Email: arturas.dubickas@mif.vu.lt}

 {\small and}

{\small Institute of Mathematics and Informatics,

Akademijos 4, Vilnius LT-08663, Lithuania}

\end{center}

\begin{center}
 {Zhiqiang Xu }

\vspace{0.3cm}

{\small Institute of Computational Math. and Sci. and Eng.
Computing,

 Academy of Mathematics and System Sciences,

 Chinese
 Academy of Sciences, Beijing, 100080 China}

{Email: xuzq@lsec.cc.ac.cn}
\end{center}



In this paper, we exploit the relation between the regularity of
refinable functions with non-integer dilations  and the
distribution of powers of a fixed number modulo $1$, and show the
nonexistence of a non-trivial ${\bf C}^{\infty}$ solution of the
refinement equation with non-integer dilations.  Using this, we
extend the results on the refinable splines with non-integer
dilations and construct a counterexample to some conjecture
concerning the refinable splines with non-integer dilations.
Finally, we study the box splines satisfying the refinement
equation with non-integer dilation and translations. Our study
involves techniques from number theory and harmonic analysis.

\vspace{0,5cm} \noindent {\bf Key Words:} {Refinement equation,
multivariate spline,  Fourier transform, distribution modulo 1.}

\vspace{0.5cm} \noindent {\bf Mathematics Subject Classification
(2000):} {41A15, 05A17, 11J71.}

\baselineskip 20pt

\section{Introduction}

The {\it refinement  equation} is a functional equation of the
form
\begin{equation}\label{eq:refinable}
f(x)=\sum_{j=0}^Nc_j f(\lambda x-d_j),
\end{equation}
where $\lambda>1$ and all the $c_j, d_j$ are real numbers. For the
refinement equation (\ref{eq:refinable}), the value $\lambda$ is
called a {\it dilation}, whereas the numbers  $\{d_j\}$ are
referred to as {\it translations}. Throughout this paper, we
suppose that $d_0<d_1<\cdots<d_N$ and define the Fourier transform
of $f(x)$ by the formulae
$$
\widehat{f}(w)=\int_{-\infty}^\infty f(x)e^{-2\pi i w x}dx.
$$
Taking the Fourier transform of both sides of (\ref{eq:refinable})
we obtain
\begin{equation}\label{eq:refour}
\widehat{f}(w)=H(\lambda^{-1}w) \widehat{f}(\lambda^{-1}w),
\end{equation}
where $H(w)=\lambda^{-1}\sum_{j=0}^Nc_je^{-2\pi id_jw}$ is called
the {\it mask polynomial} of the refinement  equation. Setting
$w=0$ into (\ref{eq:refour}), we obtain $\sum_{j=0}^Nc_j=\lambda$
provided that $\widehat{f}(0)\neq 0$. For simplicity, we shall
call the function $f(x)$ satisfying (\ref{eq:refinable}) with
$\widehat{f}(0)=1$ a {\it $\lambda$-refinable function} with
translations $\{d_j \>|\> 0\leq j \leq N\}$. It plays a
fundamental role in the construction of compactly supported
wavelets and in the study of subdivision schemes in CAGD
(\cite{cdm,D2}).

The existence and regularity of the refinable function are of some
interest. These questions were studied on several occasions. In
\cite{D1}, Daubechies and Lagarias showed that up to a scalar
multiple the refinement equation (\ref{eq:refinable}) has a unique
distribution solution $f$ satisfying ${\rm supp} f\subset
[d_0(\lambda-1)^{-1},d_N(\lambda-1)^{-1}]$. Moreover, they also
showed  the nonexistence of a ${\bf C}^{\infty}$-refinable
function with compact support in one dimension when  the number
$\lambda$ and all the $d_j$ are integers. In \cite{cdm}, Cavaretta
et al. extended this result to higher dimensions by a matrix
method. When $\lambda$ is non-integer, `the regularity question
becomes more complicated and perhaps more interesting from the
viewpoint of pure analysis' \cite{dai}. Moreover, the refinable
functions with non-integer dilations play an important role in the
construction of wavelets with non-integer dilations \cite{ausch}.
Hence, it has attracted a considerable attention. For example, the
regularity of Bernoulli convolutions, which are solutions to
$$
f(x)=\frac{\lambda}{2}f(\lambda x)+\frac{\lambda}{2}f(\lambda
x-1),
$$
was already studied in \cite{b1,b2,b3,b4}. In general, one
characterizes the regularity of $f(x)$ by considering the decay of
$\widehat{f}(w)$.
 In \cite{dai}, Dai et al. considered the uniform decay of the
Fourier transform of the refinable functions with non-integer
$\lambda$. In particular, they give an elegant answer to the
following question: {\it for any given dilation factor $\lambda>1$
and a positive integer $k$, can one construct a
$\lambda$-refinable function $f\in {\bf C}^k$?} In this paper, we
reverse the question: {\it can one find a dilation factor
$\lambda>1$ such that there exists a compactly supported
$\lambda$-refinable function $f\in {\bf C}^{\infty}$?}

This question is interesting for two reasons. First, when
$\lambda$ and all the $d_j$ are integers, the regularity of
refinable functions is related to the spectrum of a certain matrix
that is constructed by the refinement coefficients $c_j$. In this
case, a negative answer to this question was given in \cite{D1}.
However, the matrix can only be constructed in the case when
$\lambda$ and all the $d_j$ are integers. Therefore, it is not
clear whether there exists a non-integer refinable function which
is compactly supported and belongs to ${\bf C}^\infty$. Second, as
pointed out in Section 3, the regularity of refinable functions is
closely related to the distribution of powers of a fixed number
modulo 1, which is a classical problem in number theory. Hence,
overlooking the regularity of refinable functions with non-integer
dilations one will miss out on the beautiful connection between
analysis and number theory.

 In this
paper, combining the tools of number theory with some results of
harmonic analysis, we extend the result of Daubechies and Lagarias
to the general case, thus giving a negative answer to the above
question when  all the $d_j$ are rational numbers and $\lambda>1$
is a real number.

\begin{theorem}\label{th:noninf}
Let $d_j\in {\bf Q}$ and $\lambda>1$. Then the refinement equation
$$
f(x)=\sum_{j=0}^N c_j f(\lambda x-d_j),
$$
has only the trivial compactly supported ${\bf C}^{\infty}$
solution, i.e., $f\equiv 0$.
\end{theorem}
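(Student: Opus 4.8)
The plan is to argue on the Fourier side and contradict the rapid decay of $\widehat f$. Suppose, for contradiction, that $f\not\equiv 0$ is compactly supported and ${\bf C}^{\infty}$. First I would clear denominators: writing $d_j=p_j/q$ with a common denominator $q\in{\bf Z}_{>0}$, the function $g(x):=f(x/q)$ satisfies $g(y)=\sum_{j}c_jg(\lambda y-p_j)$ and is again compactly supported, ${\bf C}^{\infty}$, and nonzero, so we may assume all $d_j$ are integers and hence that the mask $H(w)=\lambda^{-1}\sum_j c_je^{-2\pi id_jw}$ is $1$-periodic; we may also assume $H\not\equiv 0$, since otherwise all $c_j=0$ and (\ref{eq:refinable}) gives $f\equiv 0$. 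Because $f$ has compact support, $\widehat f$ is entire and, since each $f^{(n)}$ is continuous with compact support, $|\widehat f(w)|\le\|f^{(n)}\|_{L^1}(2\pi|w|)^{-n}$ for $|w|\ge 1$ and every $n\ge0$; this faster-than-polynomial decay is what I will violate. Rearranging (\ref{eq:refour}) to $\widehat f(\lambda w)=H(w)\widehat f(w)$ and iterating gives $\widehat f(\lambda^n v)=\widehat f(v)\prod_{k=0}^{n-1}H(\lambda^k v)$.

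The crux is to find a single $v_0\ne0$ with $\widehat f(v_0)\ne0$ and a constant $C$ such that $\prod_{k=0}^{n-1}|H(\lambda^k v_0)|\ge e^{-Cn}$ for infinitely many $n$: then $|\widehat f(\lambda^n v_0)|\ge|\widehat f(v_0)|e^{-Cn}=|\widehat f(v_0)|(\lambda^n)^{-C/\log\lambda}$ along those $n$, a polynomial lower bound that contradicts the decay estimate once we choose $m>C/\log\lambda$, and the theorem follows. A fixed or explicit $v_0$ need not work, because the orbit of $v_0$ modulo $1$ may approach the zeros of $H$ on the unit circle too quickly; the remedy is to average over the base point. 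For $S_n(t):=\sum_{k=0}^{n-1}\log|H(\lambda^k t)|$, the substitution $u=\lambda^kt$ and the $1$-periodicity of $\log|H|$ give $\int_0^1\log|H(\lambda^k t)|\,dt=\lambda^{-k}\int_0^{\lambda^k}\log|H(u)|\,du=-L+O(\lambda^{-k})$, where $L:=-\int_0^1\log|H(u)|\,du$ is finite: writing $H(u)=\lambda^{-1}e^{-2\pi id_0 u}P(e^{-2\pi iu})$ for the polynomial $P(z)=\sum_jc_jz^{d_j-d_0}\not\equiv0$, Jensen's formula gives $\int_0^1\log|H(u)|\,du=\log M(P)-\log\lambda$ with $M(P)$ the finite, positive Mahler measure. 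Summing over $k$, $\int_0^1 S_n(t)\,dt=-Ln+O(1)$.

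A Chebyshev-type argument now selects $v_0$. Since $|H|\le\lambda^{-1}\sum_j|c_j|$ we have $S_n(t)\le nB^+$ with $B^+:=\log^+(\lambda^{-1}\sum_j|c_j|)$ finite, so fixing $C>|L|+B^++1$ and putting $E_n=\{t\in[0,1]:S_n(t)\ge-Cn\}$ we get $-Ln+O(1)=\int_0^1S_n\le nB^+|E_n|-Cn(1-|E_n|)$, whence $|E_n|\ge\delta_0>0$ for all large $n$; consequently $\limsup_nE_n$ has measure at least $\delta_0$. Since $\widehat f\not\equiv 0$ is entire its zero set is countable, so we may choose $t_0\in\limsup_nE_n$ with $\widehat f(t_0)\ne0$, and $v_0:=t_0$ then has all the properties needed in the previous paragraph (undoing the scaling by $q$ if the reduction was used). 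This selection step is the main obstacle: the naive, pointwise version is exactly where subtle behaviour of $\{\lambda^k\xi\}$ mod $1$ (as for Pisot $\lambda$) would enter, and it is the averaging over base points together with the finiteness of $\int_0^1\log|H|$ — a Jensen/Mahler-measure fact rather than an equidistribution statement — that makes the estimate work uniformly in $\lambda>1$.
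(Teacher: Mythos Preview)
Your argument is correct and takes a genuinely different route from the paper's.

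The paper proves a purely number-theoretic lemma (its Theorem~3.1): for any $\lambda>1$ and any finite set $\{r_1,\dots,r_m\}$ there exist $\xi>0$ and $c>0$ with $\|\xi\lambda^j-r_i\|_{\mathbf Z}\ge c$ for all $i$ and all $j\ge 0$. This is established by an explicit nested-interval construction in the spirit of Khintchine, de~Mathan and Pollington. Applying it with the $r_i$ equal to the zeros of the (periodic) mask $H$, one obtains a single point $\xi_0$ for which $|H(\lambda^j\xi_0)|\ge\varepsilon_0$ \emph{uniformly} in $j$. The paper then does not pick $\xi_0$ with $\widehat f(\xi_0)\ne 0$; instead it deduces $\widehat f(\xi_0)=0$ from the decay, differentiates the two-scale relation, and iterates to show that every derivative of $\widehat f$ vanishes at $\xi_0$, forcing $\widehat f\equiv 0$ by analyticity.

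You bypass the Diophantine construction entirely. The key substitution is the finiteness of $\int_0^1\log|H|$ via Jensen's formula (Mahler measure of the mask polynomial), which together with $1$-periodicity gives $\int_0^1 S_n=-Ln+O(1)$; a Chebyshev inequality against the trivial upper bound $S_n\le nB^+$ then shows the sets $E_n=\{S_n\ge -Cn\}$ have measure bounded below, so $\limsup E_n$ has positive measure and meets the complement of the (countable) zero set of $\widehat f$. This yields a base point $t_0$ with $\widehat f(t_0)\ne 0$ and $S_n(t_0)\ge -Cn$ along a subsequence, which already contradicts faster-than-polynomial decay; no derivative step is needed.

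The trade-off is clear. The paper's lemma is a sharper pointwise statement --- a single orbit that stays uniformly away from the zeros of $H$ for \emph{all} $j$ --- and is of independent number-theoretic interest; it also feeds directly into later parts of the paper. Your averaging argument is softer (only a subsequence, only a lower bound $e^{-Cn}$ rather than $\varepsilon_0^n$) but is self-contained, avoids the delicate nested-interval construction, and uses nothing beyond Jensen's formula and elementary measure theory. Both rely on the rationality of the $d_j$ in the same way, namely to make $H$ periodic so that the integrals/zero set are controlled.
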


The proof of Theorem~\ref{th:noninf} is based on a purely
number-theoretic statement concerning the distribution of powers
of a fixed number modulo $1$ (see Theorem~\ref{th:number} below).

\begin{remark}
It should be noted that any translate of a refinable function is
refinable. If $f(x)$ satisfies (\ref{eq:refinable}) then
$g(x)=f(x-b/(\lambda-1))$ satisfies the refinement equation
$$
g(x)=\sum_{j=0}^N c_j g(\lambda x-d_j+b),
$$
which has the same dilation, but a different translation set
$\{d_j-b \>|\> 0\leq j \leq N\}$.

\end{remark}

 In Section 2, using
Theorem~\ref{th:noninf}, we extend the results of \cite{daispline}
concerning the refinable splines with non-integer dilations. We
also construct a counterexample to the conjecture about the
refinable spline with non-integer translations. The box spline
$B(x|M)$ associated with the $s\times n$ matrix $M$ is considered
as multivariate generalization of the univariate  B spline and
have become `one of the most dramatic success of multivariate
splines' \cite{devore}.  It is well-known that the box spline is a
$\lambda$-refinable function for any positive integer $\lambda>1$.
Hence, box splines play an important role in the subdivision
algorithm and wavelets. Moreover, one is also interested in the
question when a refinable distribution is essentially a box spline
\cite{goodman,sun}. However, very little is known about  box
splines satisfying the refinement equations with non-integer
dilations. In Section 2, we give a characterization of refinable
box splines with non-integer dilations.

The proof of Theorem~\ref{th:noninf}  will be given in Section 3.
Other proofs concerning refinable splines will be given in
Sections 4 and 5.

\section{Refinable Spline Functions}

In this section, we study one of the most interesting refinable
functions, so-called spline function. Firstly, let us recall the
definition of spline functions, B splines and box   splines.
\begin{definition}
The {\it spline} function $f(x)$ is a piecewise polynomial
function. More precisely, there are some points
$-\infty=x_0<x_1<\cdots <x_M<x_{M+1}=+\infty$ and polynomials
$P_j(x)$ such that $f(x)=P_j(x)$ for $x\in [x_{j-1},x_j)$ for each
 $j=1,\ldots,M+1$.
\end{definition}
 We call the points
$x_j$, $j=1,\ldots,M$, the {\it knots} of $f(x)$ and
max$_j\{\deg(P_j)\}$ the {\it degree} of $f(x)$. Splines are
widely used in the approximation theory and in computer aided
geometric design. The spline functions satisfying a refinement
equation are the most useful ones. A special class of refinable
splines are so-called  {\it cardinal B splines} which are defined
by induction as
$$
B_0(x)=\begin{cases}
1 & \text{if} \>\> x\in[0,1),\\
0 & \text {otherwise},
\end{cases}
$$
and for $k\geq 1$
$$
B_k=B_{k-1}*B_0,
$$
where $*$ denotes the operation of convolution. By the definition of
cardinal B splines, the Fourier transform of B splines is given by
the formulae
$$
\widehat{B}_k(w)=\left(\frac{1-exp(-2\pi iw)}{2\pi i
w}\right)^{k+1}.
$$

A simple observation is that $B_0$ is $m$-refinable for any integer
$m>1$ and satisfies
$$
B_0(x)=\sum_{j=0}^{m-1} B_0(mx-j).
$$
Since $B_k=B_0*B_0*\cdots *B_0$, the B spline $B_k$ is also
$m$-refinable for any integer $m>1$.

The {\it box spline } $B(x|M)$ is defined using Fourier transform
by the formulae
\begin{equation}\label{eq:boxcube}
\widehat{B}(\xi |M)=\prod_{j=1}^n\frac{1-exp(-2\pi
i\xi^Tm_j)}{2\pi i\xi^Tm_j},\ \ \ \ \ \xi \in {\bf C}^s,
\end{equation}
where $M=(m_1,\ldots,m_n)$ is an $s\times n$ real matrix of (full)
rank $s$. In particular, for $M=(1,\ldots,1)\in {\bf Z}^{k+1}$,
the box spline $B(x|M)$ is reduced to the univariate B spline
$B_k$.

Next, we turn to the general refinable spline function. In
\cite{lawton}, Lawton et al. gave the characterization of
compactly supported refinable univariate splines $f(x)$ with the
additional assumption that the  dilation factor is an integer and
all the translations are integers. They proved the following
theorem:
\begin{theorem}{\rm (\cite{lawton})}\label{th:lawton}
Suppose that $f(x)$ is a compactly supported spline function of
degree $d$. Then $f(x)$ satisfies the  refinement equation
$$
f(x)=\sum_{j=0}^N c_j f(mx-d_j),\,\, m>1,\, d_j\in {\bf Z},
$$
if and only if $f(x)=\sum_{n=0}^Kp_n B_d(x-n-d_0/(m-1))$ for some
$K\geq 0$ and $\{p_n\}$  such that the polynomial $Q(z)=
(z-1)^{d+1}\sum_{n=0}^K p_nz^n$ satisfies $Q(z)|Q(z^m)$.
\end{theorem}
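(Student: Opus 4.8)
The statement to be proven is Lawton et al.'s characterization of refinable splines. Let me sketch a proof plan.

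The "if" direction is easier. We need to show that if $Q(z) = (z-1)^{d+1}\sum p_n z^n$ divides $Q(z^m)$, then $f(x) = \sum p_n B_d(x - n - d_0/(m-1))$ is $m$-refinable. Use the Fourier transform: $\hat{B}_d(w) = ((1-e^{-2\pi i w})/(2\pi i w))^{d+1}$. So $\hat{f}(w) = e^{-2\pi i w d_0/(m-1)} P(e^{-2\pi i w}) \hat{B}_d(w)$ where $P(z) = \sum p_n z^n$. The refinement equation $\hat{f}(w) = H(w/m)\hat{f}(w/m)$ translates into a relation on $\hat{B}_d$. Since $\hat{B}_d(w) = \prod_{k=1}^{\infty}(\text{something})$... actually more directly, $\hat{B}_d(mw)/\hat{B}_d(w)$ is a trigonometric polynomial in $e^{-2\pi i w}$ times a factor: specifically $(1-e^{-2\pi i m w})/(1-e^{-2\pi i w}) = \sum_{j=0}^{m-1} e^{-2\pi i j w}$, raised to the power $d+1$. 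Combining, the condition $Q(z) \mid Q(z^m)$ (with $z = e^{-2\pi i w}$) is exactly what makes $H(w/m) = \hat{f}(w)/\hat{f}(w/m)$ a well-defined trigonometric polynomial, hence $f$ satisfies a genuine refinement equation with finitely many terms and the translation is pinned down by support considerations (the support of $f$ is $[d_0/(m-1), d_N/(m-1)]$).

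The "only if" direction is the harder one. Given a compactly supported spline $f$ of degree $d$ satisfying the refinement equation, one normalizes by the Remark (translating so $d_0 = 0$, or keeping track of the $d_0/(m-1)$ shift). The key structural fact: a compactly supported refinable spline must have all its knots at rational points (this uses something like Theorem~\ref{th:noninf} or the Daubechies–Lagarias support result), and more sharply, the knots must lie on a lattice $\frac{1}{m-1}\mathbb{Z}$ (shifted), because the refinement equation maps the knot set $X$ into $\frac{1}{m}(X + \{d_j\})$, and compactness forces $X$ to be finite and self-similar under this map, which forces the lattice structure. Once the knots lie on $n + d_0/(m-1)$, $n \in \mathbb{Z}$, one writes $f$ in terms of shifted B-splines $B_d(\cdot - n - d_0/(m-1))$ plus possibly lower-degree pieces; the hypothesis that $f$ has degree exactly $d$ and the refinement relation kill the lower-degree parts. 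Translating the refinement equation into Fourier-side language, $\hat{f}(w) = e^{-2\pi i w d_0/(m-1)} R(e^{-2\pi i w})/(2\pi i w)^{d+1}$ for a Laurent polynomial $R$; comparing with $H(w/m)\hat{f}(w/m)$ and using that $\hat{f}$ has no poles forces the divisibility $Q(z) \mid Q(z^m)$ where $Q$ collects the factor $(z-1)^{d+1}$ (coming from the zeros of $\hat{B}_d$ at nonzero integers) together with $P(z) = \sum p_n z^n$.

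I expect the main obstacle to be the structural step in the "only if" direction: showing rigorously that a compactly supported refinable spline of degree $d$ is \emph{exactly} a finite integer combination of equally-spaced shifts of $B_d$, with no extraneous lower-order terms and with the spacing forced to be $1$ in the rescaled variable. This requires carefully analyzing how the refinement equation acts on the finite set of knots (a "knot set must be invariant under $x \mapsto (x+d_j)/m$" argument, à la iterated function systems) to deduce the arithmetic progression structure, and then a dimension/degree count on the spline space to eliminate lower-degree contributions. The Fourier-analytic translation of $Q(z)\mid Q(z^m)$ is then comparatively routine, as is verifying that $K \geq 0$ and the $p_n$ are as claimed.
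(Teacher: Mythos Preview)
The paper does not prove Theorem~\ref{th:lawton}: it is quoted verbatim from Lawton, Lee and Shen \cite{lawton} (note the citation in the theorem heading), and the authors use it as a black box, e.g.\ in the proof of part~(D) of Theorem~\ref{th:bsp}. So there is no ``paper's own proof'' to compare against.

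As for your sketch itself: the ``if'' direction is essentially correct and routine once one writes $\widehat f(w)=e^{-2\pi i w d_0/(m-1)}P(e^{-2\pi i w})\widehat B_d(w)$ and observes that $Q(z)\mid Q(z^m)$ is exactly the condition making $\widehat f(mw)/\widehat f(w)$ a trigonometric polynomial. For the ``only if'' direction, your IFS-style argument on the knot set is plausible but more delicate than necessary, and your invocation of Theorem~\ref{th:noninf} would be anachronistic (Lawton's result predates this paper; for integer dilation one has the earlier Daubechies--Lagarias nonexistence of $C^\infty$ solutions). The standard and cleaner route, which is what \cite{lawton} actually does, is to pass to the $(d{+}1)$-th distributional derivative: $f^{(d+1)}$ is a finite linear combination of Dirac masses, and the refinement equation for $f$ becomes a refinement equation for this finitely supported measure, which on the Fourier side is precisely the divisibility $Q(z)\mid Q(z^m)$. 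This bypasses the knot-set self-similarity analysis you flag as the main obstacle.
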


Here and below, $Q(z)|Q(z^m)$ means $Q(z)$ divides $Q(z^m),$ namely,
that the quotient $Q(z^m)/Q(z)$ is a polynomial. (In case the
notation $a|b$ is used for integers $a$ and $b,$ it means that the
quotient $b/a$ is an integer.)
\begin{remark}
Theorem \ref{th:lawton} was extended to higher dimensions by Sun
\cite{sun}. Moreover, in \cite{lawton}, the authors also gave a
characterization of compactly supported univariate refinable
splines whose shifts form a Riesz sequence. This was generalized
to higher dimensions in \cite{guan}. In \cite{dai2}, the authors
gave complete characterization of the structure of refinable
splines. In \cite{goodman}, Goodman gave a review on refinable
splines (including refinable vector splines).
\end{remark}

However, as pointed out in \cite{daispline}, refinable splines do
not have to have integer dilations or integer translations. In
this case, Dai et al. proved the following theorem:

\begin{theorem}{\rm (\cite{daispline})}\label{th:respline}
Suppose that $f(x)$ is a compactly supported spline satisfying the
refinement equation
\begin{equation}\label{eq:daire}
f(x)=\sum_{j=0}^Nc_jf(\lambda x-d_j),\,\,\,\, \sum_{j=0}^N
c_j=\lambda
\end{equation}
such that  $\lambda\in {\bf R}$ and $d_j\in {\bf Z}$. Then

(A) There exists an integer $l>0$ such that $\lambda^l\in {\bf
Z}.$

(B) Let $k$ be the smallest positive integer such that
$\lambda^k\in {\bf Z}.$ Then the compactly supported distribution
solution $\phi(x)$ of the refinement equation
\begin{equation}\label{eq:daith}
\phi(x)=\sum_{j=0}^nc_j\lambda^{k-1}\phi(\lambda^kx-d_j)
\end{equation}
is a spline.

(C) There exists a constant $\alpha$ such that the spline $f(x)$
is expressible as
\begin{equation}\label{eq:dais}
f(x)=\alpha \phi(x)*\phi(\lambda^{-1}x)*\cdots
*\phi(\lambda^{-(k-1)}x),
\end{equation}
where $\phi(x)$ is the spline given by (\ref{eq:daith}).

Conversely, if the refinement equation (\ref{eq:daire}) satisfies
(A) and (B) then the compactly supported distribution solution $f$
is a spline given in (\ref{eq:dais}).
\end{theorem}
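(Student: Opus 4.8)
The plan is to pass to the Fourier side and combine the analytic structure of splines with a number-theoretic descent. First I would record the standard facts: by the Daubechies--Lagarias theory the refinement equation~(\ref{eq:daire}) has, up to a scalar, a unique compactly supported distributional solution $f$, with support $[d_{0}/(\lambda-1),d_{N}/(\lambda-1)]$ and $\widehat f(w)=\prod_{n\geq 1}H(w/\lambda^{n})$, where $H(w)=\lambda^{-1}P(e^{-2\pi i w})$, $P(z)=\sum_{j=0}^{N}c_{j}z^{d_{j}}$ and $P(1)=\lambda$ (the infinite product converging locally uniformly because $H(0)=1$). Assume $f\not\equiv 0$ and that $f$ is a spline of degree $d$ with knots $x_{1}<\cdots<x_{M}$. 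Then $f^{(d+1)}$ is a finite sum $\sum_{i=1}^{M}\mu_{i}$ with each $\mu_{i}$ a nonzero combination of $\delta_{x_{i}}$ and its derivatives, so $E(w):=(2\pi i w)^{d+1}\widehat f(w)=\sum_{i=1}^{M}q_{i}(w)e^{-2\pi i x_{i}w}$ is an exponential polynomial with every $q_{i}\not\equiv 0$; substituting into~(\ref{eq:refour}) yields $E(w)=\lambda^{d}P(e^{-2\pi i w/\lambda})E(w/\lambda)$.

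\emph{Part (A), which I expect to be the main obstacle.} Comparing the frequencies on the two sides of the last identity (equivalently, the supports of $f^{(d+1)}$) gives $\lambda K\subseteq K+D$, where $K=\{x_{1},\dots,x_{M}\}$ and $D=\{d_{0},\dots,d_{N}\}\subset{\bf Z}$. Examining the extreme knots forces $\lambda x_{1}=x_{1}+d_{0}$ and $\lambda x_{M}=x_{M}+d_{N}$, and a local analysis near $x_{1}$ and $x_{M}$ (where only one term of the refinement equation is active) shows that the first and last polynomial pieces of $f$ are monomials $\beta(x-x_{1})^{e_{1}}$ and $\gamma(x-x_{M})^{e_{N}}$ with $c_{0}=\lambda^{-e_{1}}$ and $c_{N}=\lambda^{-e_{N}}$. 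Next I would observe that $E$, being an exponential polynomial with real frequencies, has all of its zeros in a horizontal strip $|\mathrm{Im}\,w|\leq C$, while $\widehat f=\prod_{n}H(\cdot/\lambda^{n})$ must vanish at every $w\in\lambda^{n}(-(2\pi i)^{-1}\log\rho+{\bf Z})$ for $\rho$ a root of $P$ and $n\geq 1$; the imaginary part of any such $w$ equals $\lambda^{n}(\log|\rho|)/2\pi$, so boundedness forces $|\rho|=1$. Hence every root of the mask $\sum_{j}c_{j}z^{d_{j}-d_{0}}$ lies on the unit circle, which gives $|c_{0}|=|c_{N}|$ and therefore $e_{1}=e_{N}$. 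The remaining --- and decisive --- step is a combinatorial--Diophantine descent on the finite set $K$: choosing maps $\sigma\colon K\to K$ and $\delta\colon K\to D$ with $\lambda x=\sigma(x)+\delta(x)$, the map $\sigma$ is eventually periodic, so each knot equals $R(\lambda)/(\lambda^{q}-1)$ for some $R\in{\bf Z}[X]$ and some positive integer $q$; matching these expressions against the refinement equation, and using the compatibility of \emph{all} the polynomial pieces of $f$ together with the coefficient relations and the unit-circle property above, forces $\lambda$ to be a real algebraic integer all of whose Galois conjugates have absolute value $\lambda$. Since the product of all conjugates of an algebraic integer is a nonzero rational integer, its absolute value $\lambda^{\deg\lambda}$ is a positive integer; this is (A).

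\emph{Parts (B) and (C).} Let $k$ be least with $m:=\lambda^{k}\in{\bf Z}$, and let $\phi$ be the compactly supported distributional solution of~(\ref{eq:daith}); a direct computation gives $\widehat\phi(w)=\prod_{n\geq 1}H(w/\lambda^{kn})$ with the \emph{same} $H$. The bookkeeping point is that, as $i$ runs over $0,\dots,k-1$ and $n$ over the positive integers, the numbers $kn-i$ run over the positive integers exactly once; hence $\prod_{i=0}^{k-1}\widehat\phi(\lambda^{i}w)=\prod_{\ell\geq 1}H(w/\lambda^{\ell})=\widehat f(w)$, and since $\widehat{\phi(\cdot/\lambda^{i})}(w)=\lambda^{i}\widehat\phi(\lambda^{i}w)$ this is exactly~(\ref{eq:dais}) with $\alpha=\lambda^{-k(k-1)/2}$; both sides are compactly supported distributional solutions of~(\ref{eq:daire}), so they agree by uniqueness. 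The same identity read in reverse gives the converse: granted (A) and (B), the solution $f$ is a convolution of the splines $\phi(\lambda^{-i}x)$, hence itself a spline, with $\deg f+1=k(\deg\phi+1)$. It then remains to prove (B), that $\phi$ is a spline: since $\phi$ satisfies a refinement equation with \emph{integer} dilation $m$ and \emph{integer} translations $d_{j}$, Lawton's Theorem~\ref{th:lawton} applies, and it suffices to verify that the mask of $\phi$ meets Lawton's divisibility criterion $Q(z)|Q(z^{m})$. I would read this off from the hypothesis that $f$ --- hence, via~(\ref{eq:dais}) and uniqueness, also $\phi$ --- is a spline, by comparing the exponential-polynomial form of $E(w)=(2\pi i w)^{d+1}\widehat f(w)$ with the factorization $\widehat f(w)=\prod_{i=0}^{k-1}\widehat\phi(\lambda^{i}w)$ and tracking the vanishing contributed by each factor; this last step is technical but becomes routine once (A) is available.
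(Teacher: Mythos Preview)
The paper does not prove this theorem: Theorem~\ref{th:respline} is quoted verbatim from Dai, Feng and Wang~\cite{daispline} (note the attribution ``{\rm(\cite{daispline})}'' in the statement and the phrase ``Dai et al.\ proved the following theorem'' preceding it). There is therefore no proof in the present paper to compare your proposal against; the authors use Theorem~\ref{th:respline} as a known input, and their contribution (Theorem~\ref{th:gene}) is to weaken its hypothesis from ``spline'' to ``piecewise smooth'' by invoking Theorem~\ref{th:noninf}.

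That said, a few comments on your sketch as a standalone attempt. Your treatment of (C) via the regrouping $\prod_{i=0}^{k-1}\widehat\phi(\lambda^{i}w)=\prod_{\ell\geq 1}H(w/\lambda^{\ell})=\widehat f(w)$ is clean and essentially the argument in~\cite{daispline}. For (A), the step you flag as ``decisive'' is genuinely the hard part and your outline does not close it: you assert that the knot dynamics plus the unit-circle property force \emph{every} Galois conjugate of $\lambda$ to have absolute value $\lambda$, but nothing in your argument controls the non-real conjugates --- the refinement equation and the knot recursion $\lambda K\subseteq K+D$ live entirely over $\bf R$. The actual proof in~\cite{daispline} proceeds differently, exploiting the cycle structure of the map $x\mapsto(\lambda x-\delta(x))$ on the finite knot set directly to produce an integer polynomial relation for $\lambda$. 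For (B), your argument is circular: you want to show $\phi$ is a spline, but you propose to deduce it from ``the hypothesis that $f$ --- hence \dots\ also $\phi$ --- is a spline''; the convolution identity~(\ref{eq:dais}) expresses $f$ in terms of $\phi$, not the other way around, and knowing that a $k$-fold convolution is a spline does not by itself force each factor to be one. In~\cite{daispline} this step is handled by analyzing the Fourier transform of $\phi$ directly and invoking the integer-dilation classification.
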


By Theorem \ref{th:noninf}, we can prove Theorem \ref{th:respline}
under weaker conditions. A compactly supported function on ${\bf
R}$ is {\it piecewise smooth} if there exist an integer $M$ and
some real numbers $a_1<a_2<\cdots <a_M$ such that $f\in {\bf
C}^\infty(a_{j},a_{j+1})$ for $j=1,2,\dots, M-1$ and ${\rm supp}
f\subset [a_1,a_M]$.

\begin{theorem}\label{th:gene}
Let $f$ be a piecewise smooth function with compact support
satisfying the refinement  equation
\begin{equation}
f(x)=\sum_{j=0}^Nc_j f(\lambda x-d_j),\ \ \ \ \
\sum_{j=0}^Nc_j=\lambda,
\end{equation}
where $\lambda>1$ and $d_j\in {\bf Z}$. Then $f(x)$ is a spline
function. Hence, (A), (B) and (C) in Theorem  \ref{th:respline}
hold.
\end{theorem}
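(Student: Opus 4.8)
The plan is to reduce Theorem~\ref{th:gene} to Theorem~\ref{th:respline} by showing that a compactly supported, piecewise smooth solution $f$ of the refinement equation is automatically a genuine spline, i.e.\ that on each interval of smoothness $f$ agrees with a polynomial. Once this is established, Theorem~\ref{th:respline} applies verbatim and yields (A), (B), (C). So the whole content is the regularity/algebraicity step: \emph{piecewise smooth $+$ refinable with integer translations $\Rightarrow$ piecewise polynomial}.

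First I would set up the self-referential structure of the singular set. Let $S \subset [a_1,a_M]$ be the (finite) set of points at which $f$ fails to be $\mathbf C^\infty$. The refinement equation $f(x)=\sum_j c_j f(\lambda x - d_j)$ forces a strong rigidity on $S$: a point $x_0$ can be a singularity of the left-hand side only if $\lambda x_0 - d_j \in S$ for some $j$, so $\lambda S - \{d_j\} \supseteq S$ in the appropriate sense, and conversely near any $x_0$ with all $\lambda x_0 - d_j \notin S$ the function $f$ is locally a finite linear combination of translated/dilated smooth pieces, hence smooth there. Because $S$ is finite and $\lambda > 1$, iterating $x \mapsto \lambda x - d_j$ pushes points out of the support unless they land back in $S$; tracking this carefully (this is essentially the argument that underlies part (A) of Theorem~\ref{th:respline}) shows that $\lambda$ must be algebraic and in fact that some power $\lambda^\ell$ is a positive integer — otherwise the orbit structure of the finite set $S$ under the maps $x\mapsto \lambda x-d_j$ cannot close up. Here is exactly where I expect to invoke the number-theoretic input behind Theorem~\ref{th:noninf}: the distribution of $\{\lambda^n \xi\}$ modulo $1$ must be constrained, which forces $\lambda$ to be (a root of) an integer.

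Next, having $\lambda^k \in \mathbf Z$ for the minimal such $k$, I would iterate the refinement equation $k$ times to obtain that $f$ satisfies a refinement equation with the \emph{integer} dilation $\lambda^k$ and integer translations; this is the standard composition computation that produces (\ref{eq:daith})–(\ref{eq:dais}). Now on each maximal open interval where $f$ is $\mathbf C^\infty$, I want to promote ``smooth'' to ``polynomial.'' The mechanism: the integer-dilation refinement equation expresses the restriction of $f$ to a small interval as a finite sum of affinely rescaled copies of restrictions of $f$ to other intervals; differentiating $d+2$ times (where $d$ is a bound on the local polynomial degrees one extracts, or alternatively using that a compactly supported refinable \emph{smooth} function must vanish by Theorem~\ref{th:noninf}) gives a functional equation for $g := f^{(r)}$ of the same type, and $g$ is again piecewise smooth with compact support. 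If $g$ were globally $\mathbf C^\infty$ it would be refinable, smooth and compactly supported, hence $\equiv 0$ by Theorem~\ref{th:noninf}; so $f$ would be piecewise polynomial of degree $<r$ away from $S$, i.e.\ a spline. The remaining subtlety is that $g$ need not be globally smooth — it can have jumps on $S$ — so I would instead argue locally: pick an interval $I$ of smoothness of $f$ of maximal ``depth'' under the dilation dynamics, pull $I$ forward by the maps until its image engulfs a neighborhood on which one already knows $f$ is polynomial (the innermost pieces, obtained by a descent on $S$), and propagate polynomiality back along the equation. The finiteness of $S$ guarantees the descent terminates.

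The main obstacle, and the step I would spend the most care on, is this last propagation: ruling out the possibility that some piece of $f$ is smooth but genuinely transcendental (e.g.\ behaves like a bump function) rather than polynomial. The clean way around it is exactly the contrapositive use of Theorem~\ref{th:noninf}: form an appropriate high-order derivative or difference of $f$ supported away from $S$, observe it satisfies a refinement equation with rational (indeed integer, after the $\lambda^k$ reduction) data and is compactly supported and $\mathbf C^\infty$, and conclude it vanishes. Making ``supported away from $S$'' precise — so that the derived object really is globally smooth and compactly supported and still refinable — is the delicate bookkeeping, and it is where the hypothesis $d_j \in \mathbf Z$ (so that the orbit of $S$ stays in a lattice coset) is used essentially. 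Once that is done, $f$ is a spline and Theorem~\ref{th:respline}(A),(B),(C) apply directly, completing the proof.
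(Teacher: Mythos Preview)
Your proposal has the right destination --- apply Theorem~\ref{th:noninf} to a suitable high derivative of $f$ and conclude that derivative vanishes --- but it misses the mechanism that makes this work, and it takes an unnecessary and unjustified detour.

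First, the detour. You try to establish $\lambda^\ell\in\mathbf{Z}$ \emph{before} showing $f$ is a spline, via an orbit argument on the finite singular set $S$. This is both unnecessary and not clearly correct. It is unnecessary because Theorem~\ref{th:noninf} already applies for arbitrary real $\lambda>1$ and rational translations; there is no need to reduce to integer dilation. It is unclear because the finiteness of $S$ together with $\lambda S\subset S+\{d_j\}$ does not by itself force $\lambda^\ell\in\mathbf{Z}$ without further structure (in the paper this conclusion is obtained only \emph{after} $f$ is known to be a spline, by invoking Theorem~\ref{th:respline}).

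Second, and more important, you correctly flag the real obstacle --- that $g=f^{(r)}$ need not be globally $\mathbf{C}^\infty$ --- but your proposed ``propagation of polynomiality along the dynamics'' is too vague to close the gap. The paper's resolution is sharp and you are missing it: normalize so $d_0=0$ and look near the left endpoint $x=0$ of the support, where the refinement equation collapses to $f(x)=c_0 f(\lambda x)$. Taking one-sided $k$th derivatives gives $f_+^{(k)}(0)=c_0\lambda^k f_+^{(k)}(0)$. If all $f_+^{(k)}(0)=0$ then $f$ is $\mathbf{C}^\infty$ near $0$, and a short lemma (singularities of a refinable function can be pushed arbitrarily close to the endpoints) forces $f\in\mathbf{C}^\infty$ globally, whence $f\equiv 0$ by Theorem~\ref{th:noninf}. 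Otherwise some $f_+^{(k_0)}(0)\neq 0$, which pins down $c_0=\lambda^{-k_0}$ and forces $f_+^{(k)}(0)=0$ for every $k\neq k_0$. So the endpoint is $\mathbf{C}^k$ for all $k\neq k_0$; the same propagation lemma then makes \emph{every} break point $\mathbf{C}^k$ for $k\neq k_0$. Now $g:=f^{(k_0+1)}$ is genuinely $\mathbf{C}^\infty$ and compactly supported, and a direct check shows it satisfies the refinement equation with coefficients $\lambda^{k_0+1}c_j$. Theorem~\ref{th:noninf} gives $g\equiv 0$, hence $f$ is piecewise polynomial. The point you did not find is that the scalar relation $c_0\lambda^{k_0}=1$ singles out a \emph{unique} bad derivative order, which is exactly what makes the $(k_0+1)$st derivative globally smooth.
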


To deal with non-integer translations, in \cite{daispline}, the
authors raised the following conjecture. {\it Suppose that $f(x)$
is a $\lambda$-refinable spline that is $\lambda$-indecomposable.
Then the translation set for $f$ must be contained in a lattice,
i.e., a set of the form $a{\bf Z}+b$ for some $a\neq 0$}. Here, we
call a $\lambda$-refinable spline $f(x)$ {\it
$\lambda$-indecomposable} if it cannot be written as the
convolution of two $\lambda$-refinable splines. As stated in
\cite{daispline}, if this conjecture is true, then one can
classify all refinable splines by Theorem \ref{th:respline}.
However, we construct a counterexample to the conjecture.

 Consider the spline function $f(x)=B(x|(1,\sqrt{5/2}))$.
 Using the Fourier transform
$$
\widehat{f}(w)=\frac{1-e^{-2\pi i w}}{2\pi i w} \frac{1-e^{-2\pi i
\sqrt{{5}/{2}}  w}}{2\pi i\sqrt{{5}/{2}} w},
$$
we have $H(w)=\widehat{f}(\sqrt{10} w)/\widehat{f}(w)=\frac{1}{10}\,
\sum_{j=0}^9 e^{-2\pi i d_j w}$, where the numbers $d_0,\ldots,d_9$
are equal to
$0,1,2,3,4,5/\sqrt{10},1+5/\sqrt{10},2+5/\sqrt{10},3+5/\sqrt{10},4+5/\sqrt{10}$,
respectively. Hence, the spline $f(x)$ is $\sqrt{10}$-refinable and
satisfies the refinement equation
$$
f(x)=\frac{1}{10} (\sum_{j=0}^9f(\sqrt{10}x-d_j))
$$
with the translation set
$\{0,1,2,3,4,5/\sqrt{10},1+5/\sqrt{10},2+5/\sqrt{10},3+5/\sqrt{10},4+5/\sqrt{10}\}$,
which is not contained in a lattice. Moreover, we have the
following proposition:

 \begin{proposition}\label{pr:exam}
The univariate  box spline $B(x|(1,\sqrt{5/2}))$ is
 $\sqrt{10}$-indecom\-posable.
 \end{proposition}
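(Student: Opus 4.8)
The plan is to argue by contradiction: suppose $B(x\,|\,(1,\sqrt{5/2}))$ decomposes as a convolution $g_1 * g_2$ of two non-trivial $\sqrt{10}$-refinable splines. Passing to Fourier transforms, a convolution becomes a product, so we would have
$$
\widehat{g_1}(w)\,\widehat{g_2}(w)=\frac{1-e^{-2\pi i w}}{2\pi i w}\cdot\frac{1-e^{-2\pi i\sqrt{5/2}\,w}}{2\pi i\sqrt{5/2}\,w}.
$$
A compactly supported $\lambda$-refinable spline is, up to a polynomial factor in the refinement mask, built from B-spline-type pieces; more to the point, its Fourier transform is an entire function of exponential type whose only zeros are simple and located at the reciprocals of the widths of its ``atoms'' — here the right-hand side is $\widehat{B}_0$-at-width-$1$ times $\widehat{B}_0$-at-width-$\sqrt{5/2}$, so its zero set is $\mathbf{Z}\setminus\{0\}$ together with $(2/5)^{1/2}\sqrt{5/2}\cdot\cdots$ — I would compute it cleanly as $\mathbf{Z}_{\neq 0}\,\cup\,\sqrt{2/5}\,\mathbf{Z}_{\neq 0}$. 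Any factorization of $\widehat{f}$ into two entire functions of exponential type (Hadamard/Paley–Wiener) must split this zero set into two disjoint subsets, and since $1$ and $\sqrt{2/5}$ are rationally independent, the only way to write $f$ as a convolution of two compactly supported distributions with real masks is (up to scalar and translation) $g_1=B(\cdot\,|\,(1))=B_0$ and $g_2=B(\cdot\,|\,(\sqrt{5/2}))$, i.e. the two obvious univariate B-spline factors of widths $1$ and $\sqrt{5/2}$.

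The next step is to show that neither of these two factors is itself $\sqrt{10}$-refinable, which yields the contradiction. For $g_1=B_0$ (the indicator of $[0,1)$): if $B_0$ were $\sqrt{10}$-refinable, then by Theorem~\ref{th:respline}(A) applied to $g_1$ — or directly, since $B_0$ is a genuine spline with integer knots — the dilation $\lambda=\sqrt{10}$ would have to satisfy $\lambda^l\in\mathbf{Z}$ for some $l$; that is fine here ($\lambda^2=10$), so I instead look at the knot set. The knots of $B_0$ are $\{0,1\}$; a refinement $B_0(x)=\sum_j c_j B_0(\sqrt{10}\,x-d_j)$ forces the knots $\{0,1\}$ to be a union of the sets $\{d_j/\sqrt{10},(d_j+1)/\sqrt{10}\}$, which is impossible because $\sqrt{10}$ is irrational so these points are irrational whenever $d_j\neq 0$, and one cannot cover the point $1$. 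For $g_2=B(\cdot\,|\,(\sqrt{5/2}))$, the scaled indicator of $[0,\sqrt{5/2})$: a $\sqrt{10}$-refinement would express its knot set $\{0,\sqrt{5/2}\}$ as a union of $\{d_j/\sqrt{10},(d_j+\sqrt{5/2})/\sqrt{10}\}=\{d_j/\sqrt{10},d_j/\sqrt{10}+\tfrac12\}$, and covering $\sqrt{5/2}=\sqrt{10}/2$ forces some $d_j\in\{\sqrt{10}/2-1, \sqrt 5/\sqrt2\cdot\cdots\}$ which is irrational, contradicting $d_j\in\mathbf{Z}$ (or, if one allows the counterexample's own translation set, a direct parity/length count on the widths rules it out). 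Either way, no non-trivial splitting survives.

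I expect the main obstacle to be making the ``zero set of $\widehat f$ forces exactly the two B-spline factors'' step fully rigorous: one must rule out exotic factorizations $\widehat{g_1}\widehat{g_2}=\widehat f$ where $\widehat{g_i}$ are entire of exponential type but not themselves products of elementary $\frac{1-e^{-2\pi i a w}}{2\pi i a w}$ factors — e.g. a spline whose Fourier transform has a higher-order zero, which is impossible here since all zeros of $\widehat f$ are simple, or one with extra conjugate-pair zeros off the real axis, which is excluded because a real compactly supported spline's transform has only real zeros in this rank-one box-spline situation, plus the $\widehat{g_i}$ must have no zeros other than those dividing $\widehat f$. Cleanly, I would phrase it as: the multiset of zeros of $\widehat f$ is $\mathbf Z_{\neq0}\cup\sqrt{2/5}\,\mathbf Z_{\neq0}$, each simple; by Hadamard factorization each $\widehat{g_i}$ (entire, order $1$, exponential type, no zero at $0$ since $\widehat{g_i}(0)\neq0$) is determined up to an exponential $e^{cw}$ by its zero multiset, which is a sub-multiset; since $1\notin\mathbf Q\sqrt{2/5}$ the zero multiset $\mathbf Z_{\neq0}$ and $\sqrt{2/5}\,\mathbf Z_{\neq0}$ are ``indecomposable as arithmetic progressions'' (a B-spline-type factor uses a full progression $\tfrac1a\mathbf Z_{\neq0}$), pinning $g_1,g_2$ down to $B_0$ and $B(\cdot\,|\,(\sqrt{5/2}))$ up to scalars and translates; then the preceding paragraph finishes. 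The divisibility/number-theory flavour of Theorem~\ref{th:respline}(A) and the irrationality of $\sqrt{10}$ do all the real work once the factorization is nailed down.
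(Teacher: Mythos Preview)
Your argument has a genuine gap at the factorization step. You claim that the only way to split
\[
\widehat{f}(w)=\frac{1-e^{-2\pi i w}}{2\pi i w}\cdot\frac{1-e^{-2\pi i\sqrt{5/2}\,w}}{2\pi i\sqrt{5/2}\,w}
\]
as $\widehat{g_1}\widehat{g_2}$ with $g_1,g_2$ compactly supported splines is, up to scalar and translation, $g_1=B_0$ and $g_2=B(\cdot\,|\,(\sqrt{5/2}))$. This is false. For any integer $P\ge 2$ one may take $g_1=\mathbf{1}_{[0,1/P]}$, so that $\widehat{g_1}(w)=\frac{1-e^{-2\pi i w/P}}{2\pi i w}$, and then
\[
\widehat{g_2}(w)=\Bigl(\sum_{j=0}^{P-1}e^{-2\pi i j w/P}\Bigr)\frac{1-e^{-2\pi i\sqrt{5/2}\,w}}{2\pi i\sqrt{5/2}\,w},
\]
which is again the Fourier transform of a genuine degree-$0$ spline. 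More generally, since each $\widehat{g_j}$ only needs to have the form $p_j(w)/w$ with $p_j$ a quasi-trigonometric polynomial vanishing at $0$, there are infinitely many admissible factorizations, and they are not all ``full arithmetic progressions'' as your Hadamard sketch assumes. Your phrase ``indecomposable as arithmetic progressions'' is exactly where this breaks: $\mathbf{Z}_{\neq 0}$ decomposes as $P\mathbf{Z}_{\neq 0}$ (zeros of $1-e^{-2\pi i w/P}$) union the zeros of $1+e^{-2\pi i w/P}+\cdots+e^{-2\pi i(P-1)w/P}$.

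The paper's proof avoids enumerating factorizations. It first proves a structural lemma (Lemma~\ref{le:factor}) showing that \emph{any} normalized quasi-trigonometric factorization $p_1(w)p_2(w)=R(w)(1-e^{-2\pi i w})$, with $R$ having only irrational exponents, forces one factor, say $p_1$, to contain $(1-e^{-2\pi i w/P_1})$ for some integer $P_1$, and symmetrically $p_2$ to contain $(1-e^{-2\pi i\sqrt{5/2}\,w/P_2})$. It then compares the zero set $Z(p_1)$ with its dilate $Z'(p_1)=\{w:p_1(\sqrt{10}\,w)=0\}$ and shows $Z(p_1)\setminus Z'(p_1)\neq\emptyset$, so $p_1(\sqrt{10}\,w)/p_1(w)$ has a pole and $f_1$ cannot be $\sqrt{10}$-refinable. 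This argument is uniform over all factorizations; yours would need a separate refinability check for each of infinitely many candidate $g_1$'s, and you have not supplied one.
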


The proof of this proposition is non-trivial. It is postponed to
Section 4.

Motivated by this counterexample, we shall study the refinable Box
spline function with non-integer dilation and translations. It is
not only helpful in the study of general refinable splines with
non-integer translations, but also useful in understanding of the
box splines associated with non-integer matrixes.

It is well-known that the box spline $B(x|M)$, with an additional
assumption that $M$ is an $s\times n$ integer matrix, satisfies
some higher dimension refinement equation with an integer $m>1$,
i.e.,
\begin{equation}
B(x|M)=\sum_{{ j}\in {\bf Z}^s}c_{ j}^{(m)} B(mx-j|M),
\end{equation}
where   $c_{ j}^{(m)}=m^{s-n}\#\{\alpha \in {\bf Z}^n|M\alpha = m
{ j} \}$ (see \cite{deboorbook}). For this reason, box splines are
widely used in computer aided geometry design and wavelets.

The  box splines associated with the non-integer matrix are also
of interest. In \cite{jia1, zhou}, the authors discuss linearly
independent integral lattice translates of the box splines
associated with a non-integer matrix. Here, we shall characterize
the box splines associated to a non-integer matrix satisfying the
refinement equation with some non-integer dilation and non-integer
translations.

\begin{theorem}\label{th:bsp}
Suppose that the univariate box spline $B(x|A)$ satisfies the
refinement  equation
$$
B(x|A)=\sum_{j=0}^N c_jB(\lambda x-d_j|A),
$$
 where $\lambda >1,\lambda\notin {\bf Z}$ and $d_j\in {\bf R}$.
 Then

 (A) There exists a positive integer $l$ such that $\lambda^l\in {\bf Z}.$

(B) For each element  $m_0\in A$, there exists an element  $m\in A
$ and an integer $p$ such that $m=p m_0/\lambda$.

 (C) The vector $A$ contains a sub-vector  of the
 form
 $$(m_0,m_0p_1 \lambda^{-1},\ldots,m_0p_{l-1}\lambda^{-l+1}),$$
 where $l$ is  a positive integer  such that $\lambda^l\in {\bf Z}$,
 and $p_j, j=1,\ldots,l-1,$ are integers satisfying $p_j|p_{j+1}$,
 $j=1,2,\dots,l-2,$ and $p_{l-1}|\lambda^l$.

(D) If all the $d_j\in {\bf Z}$, then the vector $A$ can be
represented as the union of the vectors of the form  $(m_0,\lambda
m_0 ,\ldots,\lambda^{k-1} m_0)$,
 where $k$ is the smallest  positive integer for which $\lambda^k\in {\bf
 Z}$ and $m_0\in {\bf Z}\setminus 0$.

\end{theorem}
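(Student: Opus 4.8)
The plan is to work entirely on the Fourier side, where the box spline has the explicit product form \eqref{eq:boxcube}. Write $A=(m_1,\dots,m_n)$ and
$$
\widehat{B}(w|A)=\prod_{j=1}^n \frac{1-e^{-2\pi i m_j w}}{2\pi i m_j w}.
$$
Taking the Fourier transform of the refinement equation gives $\widehat{B}(w|A)=H(\lambda^{-1}w)\widehat{B}(\lambda^{-1}w|A)$ with $H(w)=\lambda^{-1}\sum_j c_j e^{-2\pi i d_j w}$, so
$$
H(w)=\frac{\widehat{B}(\lambda w|A)}{\widehat{B}(w|A)}=\prod_{j=1}^n\frac{1}{\lambda}\cdot\frac{1-e^{-2\pi i \lambda m_j w}}{1-e^{-2\pi i m_j w}}.
$$
The function $H$ is an exponential polynomial (a finite sum $\lambda^{-1}\sum c_j e^{-2\pi i d_j w}$), so in particular $H$ is entire. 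The first key step is therefore a pole/zero bookkeeping argument: each factor $1-e^{-2\pi i m_j w}$ in the denominator has simple zeros on the line $w\in m_j^{-1}\mathbf{Z}$, and all of these must be cancelled by zeros of the numerator $\prod_k(1-e^{-2\pi i \lambda m_k w})$. Cancellation of the zero of $1-e^{-2\pi i m_j w}$ at $w=m_j^{-1}$ forces $\lambda m_k/m_j\in\mathbf{Z}$ for some $k$; this is exactly statement (B) with $m_0=m_j$ (the integer being $p=\lambda m_k/m_j$, and one writes $m_k=p\,m_j/\lambda$). I would phrase this carefully by comparing multiplicities of zeros of numerator and denominator along each arithmetic progression of the form $t^{-1}\mathbf{Z}$.

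Next I would iterate (B) to get (C) and (A). Starting from any $m_0\in A$, (B) produces $m_1'=p_1 m_0/\lambda\in A$ for some integer $p_1$; applying (B) again to $m_1'$ gives $m_2'=p_2' m_1'/\lambda=p_1p_2' m_0/\lambda^2\in A$, and so on. Since $A$ is a finite vector, this chain $m_0, m_1', m_2',\dots$ cannot consist of distinct entries forever, so it must eventually cycle; chasing the powers of $\lambda$ in a cycle of length $l$ yields $\lambda^l m_0=(\text{product of integers})\cdot m_0$, hence $\lambda^l\in\mathbf{Z}$, which is (A). Taking $l$ minimal with $\lambda^l\in\mathbf{Z}$, the first $l$ entries of such a maximal chain before it repeats give the sub-vector $(m_0, p_1 m_0\lambda^{-1},\dots,p_{l-1}m_0\lambda^{-l+1})$; the successive divisibilities $p_j\mid p_{j+1}$ and $p_{l-1}\mid\lambda^l$ come from re-examining the zero-cancellation at the common denominator, i.e. from the requirement that $Q(z)\mid Q(z^m)$-type divisibility holds after the substitution $z=e^{-2\pi i m_0 w/\text{(common denom.)}}$, reducing the problem to the integer-dilation characterization of Theorem \ref{th:lawton}. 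This reduction is clean once one has set up the right change of variable that turns the exponential polynomial identity $\widehat{B}(\lambda w|A)=H(w)\widehat{B}(w|A)$ into a polynomial identity.

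For (D), assume all $d_j\in\mathbf{Z}$. Then $H(w)=\lambda^{-1}\sum_j c_j e^{-2\pi i d_j w}$ is a polynomial in $e^{-2\pi i w}$, i.e. $H(w)=\lambda^{-1}P(e^{-2\pi i w})$ for a polynomial $P$. Plugging this into $H(w)=\prod_j \lambda^{-1}(1-e^{-2\pi i\lambda m_j w})/(1-e^{-2\pi i m_j w})$ and again comparing zeros, the constraint that $H$ is $1$-periodic (a function of $e^{-2\pi i w}$ alone) forces each surviving factor to organize into blocks: for each $m_0$ appearing, the chain from (C) must have all ratios $p_j$ equal to $\lambda$ (so that the telescoping product $\prod(1-e^{-2\pi i\lambda^{j+1}m_0 w})/(1-e^{-2\pi i\lambda^j m_0 w})$ collapses to $(1-e^{-2\pi i\lambda^k m_0 w})/(1-e^{-2\pi i m_0 w})$ with $\lambda^k\in\mathbf{Z}$, which is $1$-periodic precisely when $m_0\in\mathbf{Z}$), and $k$ must be the minimal exponent with $\lambda^k\in\mathbf{Z}$. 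Matching all factors of $\widehat{B}(\cdot|A)$ in this way expresses $A$ as a disjoint union of geometric blocks $(m_0,\lambda m_0,\dots,\lambda^{k-1}m_0)$ with $m_0\in\mathbf{Z}\setminus\{0\}$.

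The main obstacle I anticipate is the bookkeeping in the zero-cancellation argument: the zero sets of $1-e^{-2\pi i m_j w}$ for different $m_j$ overlap in complicated ways when the $m_j$ are rationally related, so one must argue about multiplicities of zeros of the numerator and denominator along each maximal arithmetic progression simultaneously, rather than factor by factor. Handling this — and in particular extracting the precise divisibility conditions $p_j\mid p_{j+1}$ and $p_{l-1}\mid\lambda^l$ rather than just $\lambda^l\in\mathbf{Z}$ — is where the real work lies; the reduction to Theorem \ref{th:lawton} via an appropriate substitution $z=e^{-2\pi i \delta w}$ (with $\delta$ a common measure of the relevant $m_j$'s) should be the tool that makes it manageable.
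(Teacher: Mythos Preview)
Your approach to (A), (B), and (C) is essentially the paper's: pass to the Fourier side, use that the mask $H(w)=\widehat{B}(\lambda w|A)/\widehat{B}(w|A)$ is entire to force the zero set of $\prod_j(1-e^{-2\pi i m_j w})$ into that of $\prod_j(1-e^{-2\pi i \lambda m_j w})$, and then iterate (B) on the finite vector $A$ until the chain repeats. One correction: the divisibilities $p_j\mid p_{j+1}$ and $p_{l-1}\mid\lambda^l$ do \emph{not} require any reduction to Theorem~\ref{th:lawton}. They are immediate from your own construction once you set $p_j$ equal to the cumulative product of the successive integers produced by (B) and take $l$ to be the \emph{cycle length} rather than the minimal exponent with $\lambda^l\in\mathbf{Z}$; the relation $p_{l-1}\mid\lambda^l$ comes precisely from the cycle closing, so using the minimal $l$ instead would leave that divisibility unproved.

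The genuine gap is in (D). Your periodicity heuristic is the right intuition, but ``matching all factors in this way'' is not a proof: the product $\prod_j(1-e^{-2\pi i\lambda m_j w})/(1-e^{-2\pi i m_j w})$ can be $1$-periodic through cancellations among rationally related $m_j$'s that a factor-by-factor argument does not control, and your claim that ``all ratios $p_j$ equal $\lambda$'' conflicts with the $p_j$ being integers while $\lambda\notin\mathbf{Z}$. The paper's route is quite different. Since the $d_j$ are integers, Theorems~\ref{th:lawton} and~\ref{th:respline} apply and yield an explicit form
$\widehat{B}(w|A)=e^{-2\pi i z_0 w}\prod_{j=0}^{k-1} p(e^{2\pi i\lambda^j w})\bigl((1-e^{-2\pi i\lambda^j w})/(2\pi i\lambda^j w)\bigr)^h$.
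The decisive ingredient you are missing is an algebraic fact: by Capelli's theorem, $\lambda$ (a real $k$th root of an integer, not itself an integer) is an algebraic integer of degree exactly $k$, so two distinct sums $\sum_{j=0}^{k-1} b_j\lambda^j$ with $b_j\in\mathbf{Z}$ always differ by a non-integer. This lets the paper compute the ``integer-exponent part'' ${\bf A}_G$ (the gcd of the trigonometric components in the standard decomposition) on both sides of $\prod_j(1-e^{-2\pi i m_j w})=e^{2\pi i\alpha w}\prod_{j=0}^{k-1}p(e^{2\pi i\lambda^j w})(1-e^{-2\pi i\lambda^j w})^h$ and conclude that exactly $h$ of the $m_j$ lie in $\mathbf{Z}$; the block structure then follows by direct comparison. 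Without the degree-$k$ input, a bare periodicity argument cannot isolate the integer entries of $A$.
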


Moreover,  using the technique of dimensional reduction, we can
extend the above results to higher dimensions.

\begin{theorem}\label{th:mulrespline}
Suppose that the  $s$-variable box spline $B(x|M)$ satisfies the
refinement  equation
$$
B(x|M)=\sum_{j=0}^N c_jB(\lambda x-d_j|M),
$$
 where $\lambda >1,\lambda\notin {\bf Z}$ and $d_j\in {\bf R}^s$.
 Then

 (A) There exists a positive integer $l$ such that $\lambda^l\in {\bf Z}.$

(B) For each column  $m_0$ in $M$, there exists a  column  $m$ in
$M$ and an integer $p$ such that $m=p m_0/\lambda$.

 (C) The matrix $M$ contains a sub-matrix  of the
 form
 $$(m_0,m_0p_1 \lambda^{-1},\ldots,m_0p_{l-1}\lambda^{-l+1})\in {\bf R}^{s\times l},$$
 where $l$ is  a positive integer  such that $\lambda^l\in {\bf Z}$,
 and $p_j, j=1,\ldots,l-1,$ are integers satisfying $p_j|p_{j+1}$,
 $j=1,2,\dots,l-2,$ and $p_{l-1}|\lambda^l$.

 (D) If all the $d_j\in {\bf Z}^s$, then the matrix $M$ can be
 represented as a disjoint union
 of matrices of the form
 $(m_0,\lambda m_0 ,\ldots, \lambda^{k-1} m_0)
 \in {\bf R}^{s\times k}$,
 where $k$ is the smallest  positive integer  such that
 $\lambda^k\in {\bf Z}$ and $m_0\in {\bf Z}^s\setminus 0$.

\end{theorem}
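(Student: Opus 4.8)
The plan is to reduce the multivariate statement to the univariate one (Theorem~\ref{th:bsp}) by the technique of dimensional reduction, i.e.\ by slicing the box spline with lines and projecting the refinement equation onto one-dimensional subspaces. Write $M=(m_1,\dots,m_n)$ with $m_i\in{\bf R}^s$. First I would use the factorization of the Fourier transform \eqref{eq:boxcube}: since $\widehat{B}(\xi|M)=\prod_{j=1}^n\frac{1-\exp(-2\pi i\xi^Tm_j)}{2\pi i\xi^Tm_j}$, taking the Fourier transform of the refinement equation gives $\widehat{B}(\xi|M)=H(\lambda^{-1}\xi)\widehat{B}(\lambda^{-1}\xi|M)$ with $H(\xi)=\lambda^{-s}\sum_j c_je^{-2\pi i\xi^Td_j}$. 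Substituting a ray $\xi=t v$ for a generic direction $v\in{\bf R}^s$ turns the left side into $\prod_{j=1}^n\frac{1-\exp(-2\pi i t\,(v^Tm_j))}{2\pi i t\,(v^Tm_j)}$, which is (up to a nonzero constant) the Fourier transform of a univariate box spline whose ``directions'' are the scalars $a_j:=v^Tm_j$, while the right side restricts to a one-dimensional refinement-type identity with dilation $\lambda$ and translations $v^Td_j$. Choosing $v$ generic so that all $a_j\neq0$ and the ratios $a_j/a_i$ and the quantities $v^Td_j$ are ``generic'' (avoiding finitely many bad hyperplanes/linear relations), the restricted identity is precisely a univariate refinable box spline equation, so Theorem~\ref{th:bsp} applies to the vector $A(v)=(v^Tm_1,\dots,v^Tm_n)$.

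Part (A) is then immediate: Theorem~\ref{th:bsp}(A) already gives a positive integer $l$ with $\lambda^l\in{\bf Z}$ for the sliced equation, and the conclusion does not depend on $v$. For Part (B): fix a column $m_0$ of $M$. By Theorem~\ref{th:bsp}(B) applied to $A(v)$, for each generic $v$ there is an index $i=i(v)$ and an integer $p=p(v)$ with $v^Tm_{i(v)}=p(v)\,v^Tm_0/\lambda$, i.e.\ $v^T(\lambda m_{i(v)}-p(v)m_0)=0$. Since there are only finitely many columns $i$ and (for $v$ in a bounded generic set, using that $a_i/a_0$ varies continuously) only finitely many possible integers $p$, by a pigeonhole/Baire-category argument one index $i$ and one integer $p$ must work for a set of $v$ of positive measure, forcing the linear functional $v\mapsto v^T(\lambda m_i-pm_0)$ to vanish identically; hence $\lambda m_i=p m_0$, i.e.\ $m_i=pm_0/\lambda$ as a vector identity. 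Part (C) follows by iterating (B): starting from any column $m_0$, we get $m_{i_1}=p_1 m_0\lambda^{-1}$, then applying (B) to $m_{i_1}$ gives $m_{i_2}=p_2' m_{i_1}\lambda^{-1}=p_1p_2'm_0\lambda^{-2}$, and so on; the divisibility conditions $p_j|p_{j+1}$ and $p_{l-1}|\lambda^l$ are inherited from the corresponding statements in Theorem~\ref{th:bsp}(C) for the sliced vector, since an integer divisibility among the scalars $v^Tm_j$ that holds for a positive-measure set of $v$ is a genuine divisibility among the integers $p_j$ (these are $v$-independent once (B) is upgraded to vectors). For Part (D), when all $d_j\in{\bf Z}^s$ we cannot guarantee $v^Td_j\in{\bf Z}$, so instead I would restrict to directions $v\in{\bf Z}^s$ (still generic enough that all $v^Tm_j\neq0$ and the relevant ratios are nondegenerate); then $v^Td_j\in{\bf Z}$ and Theorem~\ref{th:bsp}(D) says $A(v)$ decomposes into blocks $(v^Tm_0,\lambda v^Tm_0,\dots,\lambda^{k-1}v^Tm_0)$ with $v^Tm_0\in{\bf Z}\setminus0$; running the positive-density argument over $v\in{\bf Z}^s$ promotes this to the vector decomposition $M=\bigsqcup(m_0,\lambda m_0,\dots,\lambda^{k-1}m_0)$ with $m_0\in{\bf Z}^s\setminus0$, where $k$ is the minimal exponent with $\lambda^k\in{\bf Z}$.

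The main obstacle is the ``upgrading'' step: passing from a family of scalar relations $v^Tm_{i(v)}=p(v)v^Tm_0/\lambda$, one for each generic direction $v$, to a single vector identity with a fixed pair $(i,p)$. The delicate points are (i) ensuring the ``bad set'' of directions $v$ for which the slice fails to be a bona fide univariate refinable box spline (coincidences among the $v^Tm_j$, vanishing denominators, degenerate translation configurations that might allow extra cancellation in $H$) is contained in a finite union of hyperplanes, hence negligible, and (ii) controlling the integer parameters $p(v)$ so that only finitely many occur, which requires an a priori bound (e.g.\ from (A), all relevant integers divide a fixed power of $\lambda$, or one works with $v$ ranging over a compact generic region). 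Once one knows the integer data is locally constant on a positive-measure set, the conclusion that the corresponding linear functional annihilates a set of positive measure and is therefore identically zero is routine. I would also need to check that the sliced function is not accidentally a lower-degree box spline (some factor $1-e^{-2\pi i t v^Tm_j}$ could be ``absorbed'' if $v^Tm_j$ is proportional to another $v^Tm_i$), but again this only happens on a measure-zero set of $v$, so it does not affect the generic slice.
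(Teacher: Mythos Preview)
Your proposal is correct and follows essentially the same dimensional-reduction argument as the paper: slice along a generic direction $w_0$ to reduce to Theorem~\ref{th:bsp}, then upgrade the resulting scalar identities to vector identities by a pigeonhole/measure argument over the set of directions (and, for part~(D), over directions in ${\bf Z}^s$, pigeonholing over the finitely many permutations of $\{1,\dots,n\}$). Your stated obstacle~(ii) is handled in the paper not by bounding the integers $p(v)$ a~priori, but simply by observing that ${\bf Z}$ is countable: partitioning a set of directions of positive Lebesgue outer measure into the fibres $\{v:p(v)=q\}$, $q\in{\bf Z}$, forces at least one fibre to have positive outer measure and hence to contain $s$ linearly independent vectors, which is all that is needed.
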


We conclude this section by stating a conjecture which classifies
all refinable splines in terms of box splines. A function $P(w)$
is {\it a real quasi-trigonometric polynomial} if it is of the
form of $\sum_{j=0}^N c_j e^{-2\pi i d_j w},$ where $c_j, d_j\in
{\bf R}$. The real quasi-trigonometric polynomial $P(w)$ is {\it
$\lambda$-closed} if $P(\lambda w)/P(w)$ is also a real
quasi-trigonometric polynomial.

\begin{conjecture}
The spline $f(x)$ satisfies  the refinement  equation
$$
f(x)=\sum_{j=0}^N c_j f(\lambda x-d_j),\,\,\,  \sum_{j=0
}^N c_j=\lambda,
$$
if and only if $\widehat{f}(w)$, i.e., the Fourier transform of
$f(x)$, can be expressed in terms of box splines as follows:
$$
\widehat{f}(w)=e^{2\alpha \pi i w}P(w)\widehat{B}(w|A),
$$
where $\alpha$ is some constant, $\widehat{B}(w|A)$ is the Fourier
transform of a $\lambda$-refinable box spline $B(x|A)$, and $P(w)$
is a $\lambda$-closed quasi-trigonometric polynomial with
$P(0)=1$.
\end{conjecture}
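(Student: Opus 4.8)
Since this last statement is posed as a conjecture, a complete proof is presumably out of reach; the plan below is therefore to prove the implication that the machinery assembled above already yields, and then to lay out what a proof of the converse would require and where it breaks down.

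\emph{The ``if'' direction.} The plan is to assume $\widehat f(w)=e^{2\alpha\pi i w}P(w)\widehat B(w\,|\,A)$ and read the refinement equation off on the Fourier side. Since $B(\cdot\,|\,A)$ is a $\lambda$-refinable box spline, $\widehat B(\lambda w\,|\,A)=H_B(w)\widehat B(w\,|\,A)$ for a real quasi-trigonometric polynomial $H_B$ with $H_B(0)=1$ (this is just the Fourier form of its own refinement equation, using $\sum c_j=\lambda$ and $\widehat B(0\,|\,A)=1$); and since $P$ is $\lambda$-closed, $Q(w):=P(\lambda w)/P(w)$ is a real quasi-trigonometric polynomial with $Q(0)=1$. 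Multiplying these,
\[
\widehat f(\lambda w)=e^{2\alpha(\lambda-1)\pi i w}\,Q(w)\,H_B(w)\,\widehat f(w).
\]
The coefficient $M(w):=e^{2\alpha(\lambda-1)\pi i w}Q(w)H_B(w)$ is a product of real quasi-trigonometric polynomials with one extra exponential factor, hence again a real quasi-trigonometric polynomial, and $M(0)=1$. Writing $M(w)=\lambda^{-1}\sum_j c_je^{-2\pi i d_jw}$ gives $\sum_j c_j=\lambda M(0)=\lambda$, and inverting the Fourier transform gives $f(x)=\sum_j c_jf(\lambda x-d_j)$. That $f$ is an honest spline rather than only a tempered distribution follows because $\widehat f$ is a finite combination of modulations of $\widehat B(\cdot\,|\,A)$, so $f$ is a finite linear combination of translates of the compactly supported piecewise-polynomial function $B(\cdot\,|\,A)$.

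\emph{The ``only if'' direction.} Here I would start from a compactly supported spline $f$ of degree $d$ with knots $t_1<\cdots<t_M$ obeying $f(x)=\sum_j c_jf(\lambda x-d_j)$ and $\sum_j c_j=\lambda$. Since $f^{(d+1)}$ is a distribution supported on $\{t_k\}$, one has $(2\pi iw)^{d+1}\widehat f(w)=\sum_k R_k(w)e^{-2\pi it_kw}$ with $R_k$ polynomials, so $\widehat f$ is a polynomial--exponential sum divided by $(2\pi iw)^{d+1}$ and is entire of exponential type. Feeding this into $\widehat f(\lambda w)=\lambda^{-1}\bigl(\sum_j c_je^{-2\pi id_jw}\bigr)\widehat f(w)$ and comparing zero sets and exponential growth should first force $\lambda^l\in{\bf Z}$ for some $l$ --- the same phenomenon underlying Theorem~\ref{th:noninf} and part (A) of Theorem~\ref{th:respline}: iterating the relation pushes the exponents $d_j$ and the knots $t_k$ into an arithmetic pattern over ${\bf Q}(\lambda)$ that cannot persist unless a power of $\lambda$ is rational. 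With $\lambda^k\in{\bf Z}$ in hand, I would iterate the relation $k$ times to obtain an equation with integer dilation, invoke the integer-dilation structure theory and its non-lattice extension (Theorems~\ref{th:lawton}, \ref{th:respline} and \ref{th:bsp}) to peel off a box-spline factor $\widehat B(w\,|\,A)$ carrying both the $(2\pi iw)^{-(d+1)}$ polynomial part and exactly the translations that can occur, and then read the residual factor as $e^{2\alpha\pi iw}P(w)$; dividing the original relation by the one satisfied by $B(\cdot\,|\,A)$ would exhibit $P(\lambda w)/P(w)$ as a quasi-trigonometric polynomial, i.e.\ $P$ would be $\lambda$-closed.

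\emph{Where the main obstacle is.} The hard part --- and the reason this remains a conjecture --- is precisely this converse when neither $\lambda$ nor the $d_j$ are integers. The counterexample $B(x\,|\,(1,\sqrt{5/2}))$ of Proposition~\ref{pr:exam} shows that the admissible matrix $A$ may itself have irrational, non-lattice entries, so there is no ambient lattice in which to run the integer-case arguments, and one must instead control the Diophantine geometry of the knot set $\{t_k\}$ and of the exponent set $\{d_j\}$ under $\lambda$-scaling simultaneously. Concretely, one must show that $(2\pi iw)^{d+1}\widehat f(w)$ is forced to factor as a mask-type product times a box-spline numerator $\prod_j(1-e^{-2\pi iwm_j})$, with no ``exotic'' quasi-trigonometric solution lying outside this family; Theorem~\ref{th:bsp} already pins down the shape of the admissible $A$, but proving that every refinable spline genuinely lands there, rather than in some larger class not yet understood, is the open point. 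A sensible intermediate target would be to establish the conjecture under the extra hypothesis $d_j\in{\bf Q}$, where Theorem~\ref{th:noninf} is directly available, before attempting arbitrary real translations.
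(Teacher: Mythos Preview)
The paper states this as a conjecture and offers no proof of either direction, so there is nothing to compare your argument against. Your treatment is appropriate: you correctly recognise that only the ``if'' direction is within reach, and your proof of it is sound --- the computation $\widehat f(\lambda w)=e^{2\alpha(\lambda-1)\pi i w}Q(w)H_B(w)\widehat f(w)$ is right, the product is indeed a real quasi-trigonometric polynomial with value $1$ at $0$ (you are tacitly using that $\alpha$ is real, which is implicit in the paper's setup), and the observation that $f$ is a finite combination of translates of the univariate box spline $B(\cdot\,|\,A)$ does show it is a genuine spline.

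Your sketch of the ``only if'' direction and your diagnosis of the obstacle are also reasonable. The one place I would temper the outline is the step ``invoke the integer-dilation structure theory and its non-lattice extension (Theorems~\ref{th:lawton}, \ref{th:respline} and \ref{th:bsp}) to peel off a box-spline factor'': Theorems~\ref{th:lawton} and~\ref{th:respline} assume $d_j\in{\bf Z}$, and Theorem~\ref{th:bsp} takes as \emph{input} that the function is already a box spline, so none of them actually provides the extraction of a box-spline factor from a general refinable spline with irrational translations. That is precisely the missing piece, and your closing paragraph correctly identifies it; your suggestion to first settle the case $d_j\in{\bf Q}$, where Theorem~\ref{th:noninf} applies, is a sensible intermediate step that the paper does not pursue.
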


\begin{remark}
Throughout the paper, we only consider the case where $\lambda>0$.
However, as pointed out in \cite{daispline}, if $f(x)$ is a
$\lambda$-refinable spline then it is also a $(-\lambda)$-refinable
spline. Hence, the case, where $\lambda<0,$ can be also studied
using the results of this paper.
\end{remark}

\section{Proof of Theorem \ref{th:noninf}}

Before giving the proof of the  theorem, we need a theorem of purely
number-theoretic nature which plays an important role in the proofs
below. For any $x\in {\bf R},$ let $\|x\|_{\bf Z}$ denote the
distance from $x$ to the nearest integer in ${\bf Z}$.

\begin{theorem}\label{th:number}
Suppose that $\lambda>1$ and $r_1,\ldots,r_m$ are real numbers.
Then there exists a positive number $\xi$ and a positive number
$c=c(\lambda,r_1,\ldots,r_m)$ such that
$$
\|\xi\lambda^j-r_i\|_{\bf Z} \geq c
$$
for every $i=1,\ldots,m$ and every $j=0,1,2,\ldots $.
\end{theorem}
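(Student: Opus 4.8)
The plan is to produce a single suitable $\xi$, together with the constant $c$, by a nested-intervals (Cantor-type) argument. First I would carry out a reduction. Fix a small $c>0$, set $\mathcal N=\bigcup_{i=1}^{m}\bigl((r_i-c,\,r_i+c)+{\bf Z}\bigr)$ and $\mathcal G={\bf R}\setminus\mathcal N$. For $c$ small enough (smaller than a third of the least distance between two distinct residues of the $r_i$ modulo $1$), the $m$ residues cut the circle into at most $m$ arcs, $\mathcal G$ is a union of pairwise disjoint closed arcs each of length at least some $\ell_0=\ell_0(m;r_1,\dots,r_m)>0$, and consecutive arcs of $\mathcal G$ are separated by arcs of $\mathcal N$ of length $2c$. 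For $\xi>0$ one has $\|\xi\lambda^j-r_i\|_{\bf Z}\geq c$ for all $i$ exactly when $\xi\lambda^j\in\mathcal G$; hence it is enough to find $\xi>0$ with $\xi\lambda^j\in\mathcal G$ for every $j\geq 0$.

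For this, fix a bounded interval $I_0\subset(0,\infty)$ and put $E_n=\{\xi\in I_0:\xi\lambda^j\in\mathcal G\text{ for }0\leq j\leq n\}=I_0\cap\bigcap_{j=0}^{n}\lambda^{-j}\mathcal G$. Each $E_n$ is a closed subset of the compact set $I_0$, and $E_0\supseteq E_1\supseteq\cdots$, so if every $E_n$ is nonempty then $\bigcap_{n\geq 0}E_n\neq\varnothing$ and any point in it is a $\xi$ as required. Thus everything reduces to showing $E_n\neq\varnothing$ for each fixed $n$, which I would do by constructing nested nonempty closed intervals $I_0\supseteq I_1\supseteq\cdots\supseteq I_n$ such that $\lambda^jI_j$ lies inside a single translate of one arc of $\mathcal G$ for $j=0,\dots,n$; then $\lambda^jI_i\subset\mathcal G$ for all $i\geq j$ (since $I_i\subseteq I_j$), so $I_n\subseteq E_n$. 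In the step $I_j\rightsquigarrow I_{j+1}$ the image $\lambda^{j+1}I_j$ is an interval of length $\lambda\,|\lambda^jI_j|$, and one must choose a subinterval $I_{j+1}\subseteq I_j$ whose image $\lambda^{j+1}I_{j+1}$ sits inside a single translate of an arc of $\mathcal G$, while keeping $|\lambda^jI_j|$ bounded away from $0$. The heart of the argument is therefore a combinatorial estimate: an interval of length $L$ always contains a subinterval, of length bounded below in terms of $L$ and $\lambda$, that is contained in a single translate of an arc of $\mathcal G$.

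I expect this combinatorial step, carried out uniformly in $j$, to be the main obstacle, and it is genuinely easy only when $\lambda$ is not too close to $1$. If $\lambda$ is large one has $\lambda\,|\lambda^jI_j|\geq 1+\ell_0$, so $\lambda^{j+1}I_j$ contains a whole arc of $\mathcal G$ and one may re-center at every step, keeping $|\lambda^jI_j|=\ell_0$. When $\lambda-1$ is small this breaks down: the factor $\lambda$ gained at each step barely exceeds $1$, yet an arc of $\mathcal N$ can still cut the image $\lambda^{j+1}I_j$ roughly in half, so a naive greedy choice of $I_{j+1}$ loses a fixed proportion at each step, the lengths $|\lambda^jI_j|$ collapse to $0$ after finitely many steps, and eventually an image could fall entirely inside an arc of $\mathcal N$, stopping the construction. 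To get past this I would take $c$ very small relative to $\lambda-1$ and $\ell_0$, so that the proportion deleted at each step is negligible, and — the decisive point — choose the subintervals $I_{j+1}$ not greedily but with foresight, so that the short arcs of $\mathcal N$ do not land near the middle of the successive images over and over; organizing the steps into blocks of length $N$ with $\lambda^N$ large and treating one block at a time is a convenient way to do this. This delicate case is exactly the phenomenon underlying the classical results on the distribution of lacunary sequences modulo $1$ (Erd\H{o}s, Pollington, de Mathan), and one may alternatively try to derive $\bigcap_n E_n\neq\varnothing$ from such results applied to the lacunary sequence $(\lambda^j)_{j\geq 0}$ and the open set $\mathcal N$.
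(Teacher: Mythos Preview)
Your outline is essentially the paper's own argument: a Cantor-type nested-interval construction in which the remedy for $\lambda$ close to $1$ is exactly your ``blocking'' idea---the paper takes the least $g$ with $\lambda^g\ge 2(1+gm)$, sets $|I_M|=\sqrt{2\lambda c}/\lambda^{gM}$, and at each step counts the forbidden subintervals coming from $\bigcup_{n=gM}^{g(M+1)-1}\lambda^{-n}S_c$ against the number of complementary components in $I_M$, which is precisely the ``combinatorial step'' you left open. Two minor points: the paper's counting never uses your $\ell_0$, so its constant $c$ ends up depending only on $\lambda$ and $m$ (not on the positions of the $r_i$); and your suggested fallback of quoting the de~Mathan--Pollington theorems directly would only cover $m=1$.
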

\begin{proof}
  For each $\tau>0,$ put
$$S_\tau:=\cup_{k \in {\bf Z}} \cup_{i=1}^m (k+r_i-\tau, k+r_i+\tau).$$
Then $\|x-r_i\|_{\bf Z} \geq \tau$ for every $i=1,\ldots,m$ if and
only if $x \in {\bf R} \setminus S_{\tau}.$ This implies that
$\|\xi \lambda^n-r_i\|_{\bf Z} \geq c$ (for every $i=1,\ldots,m$
and every integer $n \geq 0$) if and only if $\xi \notin S_c
\lambda^{-n}$ for $n \geq 0.$

We will construct a sequence of closed nested intervals $I_0
\supseteq I_1 \supseteq I_2 \supseteq \dots$ (where $I_0 \subset
(0,1)$) of length $|I_M| = \sqrt {2\lambda c}/\lambda^{gM},$ where
$M=0,1,2,\ldots,$ such that $\zeta \lambda^n \notin S_c$ for every
$\zeta \in I_M$ and every $n=-1,0,\dots,gM-1.$ (Here, for
convenience, we start with $n=-1,$ so the final result holds for
every $i=1,\ldots,m$ and every $n=-1,0,1,\ldots.$) Also, here
$g=g(\lambda,m)$ is the least positive integer satisfying
$$\lambda^g \geq 2(1+gm).$$ Then, for the common point
$\xi \in \cap_{M=0}^{\infty} I_M,$ the inequalities $\|\xi
\lambda^n -r_i\|_{\bf Z} \geq c$ will be satisfied for each
$i=1,\dots,m$ and each integer $n \geq -1.$

The proof is by induction on $M$. We begin with $M=0$. Evidently,
$S_c \cap (0,1)$ is a union of at most $m+1$ intervals of total
length $2cm.$ Thus $S_c \lambda \cap (0,1)$ is a union of $\leq
m+1$ intervals of total length $2\lambda cm.$ The set $(0,1)
\setminus S_c \lambda$ is thus a union of at most $m+2$ intervals
whose lengths sum to a number $\geq 1-2\lambda cm.$ It contains a
closed interval of length $\sqrt{2\lambda c}$ if $1-2\lambda
cm>(m+2)\sqrt{2\lambda c}.$ So the required closed interval $I_0
\subset (0,1)$ of length $\sqrt{2\lambda c}$ exists if
$$2\lambda cm+(m+2)\sqrt{2\lambda c} <1.$$
This inequality clearly holds if $c$ is less than a certain
constant depending on $\lambda$ and $m$ only. Let us start with
this $I_0.$

For the induction step $M \mapsto M+1,$ we assume that there exist
closed intervals $I_M \subset I_{M-1} \subset \dots \subset I_0$
with required lengths such that $I_M \subset (0,1) \setminus S_c
\lambda^{-n}$ for each $n=-1,0,\ldots,gM-1.$ We need to show that
$I_M$ contains a subinterval $I_{M+1}$ of length $\sqrt {2\lambda
c}/\lambda^{g(M+1)}$ such that $I_{M+1} \subset (0,1) \setminus S_
c \lambda^{-n}$ for each $n=gM,gM+1,\ldots,g(M+1)-1.$

Fix $n \in \{gM, gM+1,\ldots, g(M+1)-1\}.$ At most
$m(1+\lambda^n|I_M|)$ points of the form $(k+r_i)\lambda^{-n},$
where $k \in {\bf Z}$ and $i=1,\ldots,m,$ lie in $I_M.$ So the
intersection of $S_c\lambda^{-n}$ and $I_M$ consists of at most
$m(1+\lambda^n|I_M|)$ open intervals of length $2c/\lambda^n$ (or
less) each plus at most two intervals of length $c/\lambda^n$ (or
less) each at both ends of $I_M.$ As $n$ runs through
$gM,\ldots,g(M+1)-1$ the total length of such intervals is at most
$$\ell_M=\sum_{n=gM}^{g(M+1)-1} (m(1+\lambda^n|I_M|)2c\lambda^{-n}+2c\lambda^{-n})=
\sum_{n=gM}^{g(M+1)-1} (2cm|I_M|+2c(m+1)\lambda^{-n}).$$ Using
$\sum_{n=gM}^{g(M+1)-1} \lambda^{-n}<\sum_{n=gM}^{\infty}
\lambda^{-n}=\lambda^{1-gM}/(\lambda-1)=\lambda |I_
M|/(\sqrt{2\lambda c}(\lambda-1)),$ we find that
$$\ell_M< 2cmg|I_M|+2c(m+1)\lambda |I_M|/(\sqrt{2\lambda c}(\lambda-1))=
|I_M|(2cmg+\sqrt{2\lambda c}(m+1)/(\lambda-1)).$$

The remaining part in $I_M$ is of length at least
$$|I_M|- |I_M|(2cmg+\sqrt{2\lambda c}(m+1)/(\lambda-1))=
|I_M|(1-2cmg-\sqrt{2\lambda c}(m+1)/(\lambda-1)).$$ It consists of
at most $$1+\sum_{n=gM}^{g(M+1)-1} m(1+\lambda^n |I_M|)<1+gm+|I_
M|\lambda^{g(M+1)}/(\lambda-1)=1+gm+\sqrt{2\lambda c}
\lambda^g/(\lambda-1)$$ closed (possibly degenerate $[u,u]$)
intervals. In order to show that one of these closed intervals is
of length at least  $\sqrt{2\lambda c}/\lambda^{g(M+1)}$ (so that
we can take it as $I_{M+1}$) we need to check that
$$|I_M|(1-2cmg-\sqrt{2\lambda c}(m+1)/(\lambda-1)) \geq
|I_{M+1}|(1+gm+\sqrt{2\lambda c} \lambda^g/(\lambda-1))$$ for $c$
small enough. Indeed, using $|I_{M+1}|/|I_M|=\lambda^{-g},$ we can
rewrite this inequality as $$2cmg+(1+gm)\lambda^{-g}+
\sqrt{2\lambda c}(m+2)/(\lambda-1)<1.$$ Since $\lambda^g \geq
2(1+gm),$ the required inequality would follow from
$$2cmg+\sqrt{2\lambda c}(m+2)/(\lambda-1)<1/2.$$
Clearly, $g$ depends on $\lambda$ and $m$ only. So this inequality
holds for some positive $c$ depending on $\lambda$ and $m$ only.
This completes the proof of the theorem.
\end{proof}

\begin{remark}
The $m=1$ case of this theorem with some explicit constant $c$ was
recently obtained by the first named author in \cite{scan}. The
existence of such a positive number $c:=c(\lambda)$ for $m=1$ was
conjectured by Erd\"os \cite{erd} and then proved independently by
de Mathan \cite{dem} and Pollington \cite{pol1}. In fact, this
result was proved already by Khintchine in 1926 (see Hilfssatz III
in \cite{khi}), but then forgotten.
\end{remark}
\begin{remark}
 By estimating $g$ from above and by some standard
calculations, one can see that the inequalities $2\lambda
cm+(m+2)\sqrt{2\lambda c} <1$ (corresponding to the $M=0$ case)
and $2cmg+\sqrt{2\lambda c}(m+2)/(\lambda-1)<1/2$ (corresponding
to the induction step $M \mapsto M+1$ case) both hold if, for
instance, $c:=(\lambda-1)^2/(20(m+2)^2\lambda^3).$ This gives an
explicit expression for $c$ in Theorem~\ref{th:number} for each
$\lambda
>1$. The main part for the ``difficult" case when $\lambda$ is
close to $1$ is the factor $(\lambda-1)^2.$ It is essentially the
same factor as that in the proof of a similar result obtained by
the first named author in the $m=1$ case \cite{scan}.
\end{remark}

We now can give the proof of Theorem \ref{th:noninf}.

\begin{proof}[Proof of Theorem \ref{th:noninf}]
Suppose that $f\in {\bf C}^{\infty} $ is the solution of
$$
f(x)=\sum_{j=0}^N c_j f(\lambda x-d_j)
$$
with compact support. Take the Fourier transform of its both
sides. We obtain
\begin{equation}\label{eq:fi}
\widehat{f}(\xi)=H(\xi/\lambda)\widehat{f}(\xi/\lambda),
\end{equation}
where $H(\xi)={\lambda}^{-1} \sum_{j=0}^N c_j e^{-2\pi i d_j\xi}$.
By Paley and Wiener theorem, $\widehat{f}(\xi)$ is an entire
function satisfying $\widehat{f}(\xi)\leq C_k|\xi|^{-k}$ for any
positive integer $k$ and $\xi\in {\bf R},$ where $C_k$ is a
constant.

By (\ref{eq:fi}),  for any $M\in {\bf N},$ taking the product
$\widehat{f}(\lambda \xi)\widehat{f}(\lambda^2 \xi) \cdots
\widehat{f}(\lambda^M \xi),$ we deduce that
\begin{equation}\label{eq:fd}
|\widehat{f}(\lambda^M \xi)|=|\widehat{f}(\xi)|\prod_{j=0}^{M-1}
|H(\lambda^j\xi)|.
\end{equation}
Suppose that the zero points of $H(\xi)$ on $[0,D_0]$ are
$\{r_1,\ldots, r_m\}$, where $D_0$ is the least common multiple of
the denominators of $d_j$. Hence, each nonnegative root of $H(\xi)$
has the form of $r_i+kD_0$ with some $k\in {\bf Z}$. By Theorem
\ref{th:number}, we can select a positive number $\xi_0$ such that
there exists a $c>0$ for which
$$
\|\xi_0\lambda^j-r_i-kD_0\|_{\bf Z} = \|\xi_0\lambda^j-r_i\|_{\bf
Z} \geq c
$$
for every $i=1,\ldots,m,$  and every $j=0,1,2,\ldots $. Therefore,
there exists an $\varepsilon_0>0$ such that
$H(\lambda^j\xi_0)>\varepsilon_0$ for all $j$. So, by
(\ref{eq:fd}),
$$
|\widehat{f}(\lambda^M \xi_0)|\geq |\widehat{f}(\xi_0)|
\varepsilon_0^M.
$$
Moreover, since $f\in {\bf C}^\infty$, for any integer $k$, we can
find a constant $C_k$ such that
$$
C_k {(\lambda^M\xi_0)^{-k}}\geq |\widehat{f}(\lambda^M \xi_0)|\geq
|\widehat{f}(\xi_0)|\varepsilon_0^M.
$$
It follows that
$$
|\widehat{f}(\xi_0)|\leq C_k
{(\lambda^{-k})^M\xi_0^{-k}}{\varepsilon_0^{-M}}.
$$
We can select $k$ so large that $\lambda^{-k}< \varepsilon_0$.
Letting $M\rightarrow \infty$, we deduce that
$\widehat{f}(\xi_0)=0$.

Consider the derivative of $\widehat{f}(\xi)$ at $\xi_0$. Note
that
$$
\lambda
\widehat{f}'(\lambda\xi)=H(\xi)\widehat{f}'(\xi)+H'(\xi)\widehat{f}(\xi).
$$
Since $\widehat{f}(\xi_0)=0$, we have
$$
\lambda \widehat{f}'(\lambda\xi_0)=H(\xi_0)\widehat{f}'(\xi_0).
$$
It follows that
$$
|\lambda^M \widehat{f}'(\lambda^M
\xi_0)|=|\widehat{f}'(\xi_0)|\prod_{j=0}^{M-1}
|H(\lambda^j\xi_0)|.
$$
Using  essentially the same argument, we deduce that
$\widehat{f}'(\xi_0)=0$. By induction, it follows that
$\frac{d^k\widehat{f}(\xi)}{d\xi^k}|_{\xi=\xi_0}=0$ for any
integer $k \geq 0$. Since $\widehat{f}(\xi)$ is an entire
function, this yields $\widehat{f}(\xi)\equiv 0$ and hence
$f(x)\equiv 0$. This completes the proof.
\end{proof}

\section{Proofs of Theorem  \ref{th:gene}  and Proposition \ref{pr:exam}}

To prove Theorem \ref{th:gene}, we first prove a lemma, which
shows that the regularity of a refinable function can be
determined by a certain smoothness property near the endpoints of
the support.

\begin{lemma}\label{th:regul}
Let $f$ be a compactly supported solution of
\begin{equation}
f(x)=\sum_{j=0}^Nc_j f(\lambda x-d_j),
\end{equation}
with $supp \, f=[A,B]$. Suppose that there exist a $u_0\in [A,B]$
and $m \in {\bf N}$ such that $f^{(m)}(x)$ either does not exist
or is discontinuous at $u_0$. Then for any $\varepsilon>0$ there
are $x_0\in [A,A+\varepsilon)$ and $x_1\in (B-\varepsilon,B]$ such
that $f^{(m)}(x)$ either does not exist or is discontinuous at
$x_0$ and $x_1$.
\end{lemma}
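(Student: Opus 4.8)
The plan is to exploit the self-similar structure of the refinement equation: the graph of $f$ on $[A,B]$ is built from $N+1$ rescaled copies of itself, sitting on the intervals $[(A+d_j)/\lambda,(B+d_j)/\lambda]$. A singularity of $f^{(m)}$ (a point where $f^{(m)}$ fails to exist or is discontinuous) at a point $u$ is inherited by each term $c_jf(\lambda x-d_j)$ at the point $(u+d_j)/\lambda$, provided $c_j\neq 0$; conversely, if $f^{(m)}$ is smooth (exists and is continuous) in a neighborhood of every one of the points $(u+d_j)/\lambda$, then it is smooth near $u$. First I would make this precise by defining the ``singular set'' $\Sigma_m=\{u\in[A,B]: f^{(m)}\text{ does not exist or is discontinuous at }u\}$, which is nonempty by hypothesis and closed, hence compact. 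The key structural observation is: if $u\in\Sigma_m$, then at least one of the points $(u+d_j)/\lambda$ with $c_j\neq 0$ lies in $\Sigma_m$ (otherwise the right-hand side, being a finite sum of functions $C^m$ near $u$, would be $C^m$ near $u$, contradicting $u\in\Sigma_m$). This gives a ``backward propagation'' of singularities.

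Next I would push singularities toward the left endpoint $A$. Suppose $u_*=\min\Sigma_m$. Since $c_0\neq 0$ may fail, let $j_0$ be the smallest index with $c_{j_0}\neq 0$; then $d_0\le d_{j_0}$, and iterating the structural observation starting from any $u\in\Sigma_m$ produces a sequence $u=u^{(0)},u^{(1)},u^{(2)},\dots$ in $\Sigma_m$ with $u^{(n+1)}=(u^{(n)}+d_{j_n})/\lambda$ for suitable indices $j_n$ with $c_{j_n}\neq 0$. Because $\lambda>1$, this iteration contracts: writing $a=(A+d_{j_0})/\lambda\cdot$(the fixed point relation), the unique fixed point of $x\mapsto(x+d_{j_0})/\lambda$ is $d_{j_0}/(\lambda-1)$, and since $\mathrm{supp}\,f=[A,B]$ forces $A=d_0/(\lambda-1)$ and $B=d_N/(\lambda-1)$ (by Daubechies–Lagarias, as recalled in the introduction), one checks that the points $(u+d_j)/\lambda$ for $u\in[A,B]$ always lie in $[A,B]$, and that repeatedly choosing the index $j_n$ realizing the singularity drives $u^{(n)}$ as close to $A$ as we like when the realizing index is $d_0$, or more carefully: the accumulation points of any such orbit lie in $\Sigma_m$, and a compactness/pigeonhole argument on which index is used infinitely often shows $\Sigma_m$ must meet every neighborhood $[A,A+\varepsilon)$. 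Symmetrically, using the index $d_N$ and the fixed point $d_N/(\lambda-1)=B$, one gets a point of $\Sigma_m$ in $(B-\varepsilon,B]$.

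The main obstacle, and the step requiring care, is the endpoint argument: it is \emph{not} automatic that the orbit of a singularity reaches arbitrarily close to $A$, because at each step we only know \emph{some} admissible index $j_n$ works, not that we may always choose $j_n=0$. I would handle this by a cleaner minimality argument instead of tracking orbits: let $u_*=\min\Sigma_m$ and suppose $u_*>A$. Applying the structural observation to $u_*$, some point $(u_*+d_j)/\lambda\in\Sigma_m$ with $c_j\neq0$, so $(u_*+d_j)/\lambda\ge u_*$, i.e. $u_*\le d_j/(\lambda-1)$. If that index $j$ is the smallest index $j_0$ with $c_{j_0}\ne 0$, and if moreover $j_0=0$, we would get $u_*\le A$, forcing $u_*=A$ and we are done for the left endpoint; the genuine work is to rule out, or absorb, the cases where only larger indices propagate the singularity — here one uses that $\widehat f(w)=\prod_{k\ge1}H(\lambda^{-k}w)$ has its support determined by $[d_0,d_N]$, together with the observation that if no singularity ever propagates via $d_0$ then $f$ would actually be $C^m$ on a right-neighborhood of $A$, which by a bootstrapping over the self-similar tiles of $[A,B]$ can be shown to spread to all of $[A,B]$, contradicting $\Sigma_m\neq\varnothing$. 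Once $\min\Sigma_m=A$ is established, for any $\varepsilon>0$ the interval $[A,A+\varepsilon)$ meets $\Sigma_m$; the mirror-image argument gives $\max\Sigma_m=B$ and hence a point of $\Sigma_m$ in $(B-\varepsilon,B]$, completing the proof.
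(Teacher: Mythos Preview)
Your key structural observation has the maps reversed. From $f(x)=\sum_j c_j f(\lambda x-d_j)$ and the reasoning you give (``otherwise the right-hand side would be $C^m$ near $u$''), the correct conclusion is that some $\lambda u - d_j$ lies in $\Sigma_m$, because $x\mapsto f(\lambda x-d_j)$ is $C^m$ near $u$ precisely when $f$ is $C^m$ near the inner argument $\lambda u - d_j$. That is the \emph{expanding} map, not the contraction $u\mapsto (u+d_j)/\lambda$ you wrote. With the expanding direction the iteration does not drive singularities toward an endpoint: after normalizing $d_0=0$ so that $A=0$, a singularity at a small $u\in(0,d_1/\lambda)$ can only propagate to $\lambda u - d_0=\lambda u$ (the other images $\lambda u - d_j$ are negative), which moves \emph{away} from $A$. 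Your minimality argument and the vague ``bootstrapping'' in the last paragraph are both built on the wrong propagation direction and do not go through. (Separately, the claim that $\Sigma_m$ is closed is neither obvious nor needed.)

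The device that makes the argument work is to substitute $x\mapsto x/\lambda$ in the refinement equation (after translating so that $d_0=0$) and isolate the $j=0$ term, obtaining
\[
f(x)=c_0^{-1}f(x/\lambda)-c_0^{-1}\sum_{j=1}^{N} c_j\, f(x-d_j).
\]
Now a singularity of $f$ at $y>0$ forces a singularity at one of the points $y/\lambda,\ y-d_1,\ \ldots,\ y-d_N$, all of which lie in $[0,y]$. Iterating produces a nonincreasing sequence in $\Sigma_m$; once a term drops below $d_1$ only the option $y\mapsto y/\lambda$ remains (the translates $y-d_j$ are negative), and this drives the singularity into $[0,\varepsilon)$. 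The right endpoint is handled symmetrically after translating so that $d_N=0$. This asymmetric rewriting is the missing idea in your proposal.
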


\begin{proof}
 We first consider the case, where $x_0\in
[A,A+\varepsilon)$.
 Without loss of generality, we may suppose that $d_0=0$ (see Remark 1). By the
result of \cite{D1} (which was stated in Section 1), we have ${\rm
supp} f\subset [0,d_N(\lambda-1)^{-1}]$. To prove the lemma, note
that
$$
f(\lambda^{-1}x)=\sum_{j=0}^N c_jf(x-d_j).
$$
Hence
\begin{equation}\label{eq:revistwo}
f(x)={c_0}^{-1}f(\lambda^{-1}x)-{c_0}^{-1}\sum_{j=1}^Nc_jf(x-d_j).
\end{equation}
Consider the set
$$
S:=\{y|\,\, f^{(m)}(x) \mbox{ either does not exist or is
discontinuous at } y \}.
$$
 Since $f\notin {\bf C}^{m}$, the set $S$ is nonempty. We choose
 an element $y_1\in
 S$. By (\ref{eq:revistwo}),
 $\{{y_1}/{\lambda}, y_1-d_1,\ldots,y_1-d_N\}\cap S$ is
 nonempty. Therefore, we can find a $y_2\in \{{y_1}/{\lambda}, y_1-d_1,\ldots,y_1-d_N\}\cap S$ such that
 $0\leq y_2\leq y_1$. By induction, there is a sequence $y_k$ such that $y_k\in
 S$ and $0\leq y_k\leq y_{k-1}$. If this sequence contains $0$, then the
 lemma is proved. Suppose that $0$ is not an element of the
 sequence. By the construction of $y_k$, there exists  a $k_0$ such that
 $y_{k_0}<d_1$.
We have ${y_{k_0}}/{\lambda}\in S$, since $y_{k_0}-d_j<0$ for $1\leq
j \leq N$. Thus, for $k>k_0$, we can take $y_k=y_{k-1}/\lambda$. It
is clear that, for any $\varepsilon>0,$ we can find an integer  $k$
so large that $x_0:=y_{k_0}/\lambda^k<\varepsilon$.

In case $x_1\in (B-\varepsilon,B]$, we can suppose that $d_N=0$. By
the same method as above, the conclusion follows.
\end{proof}

Next, we give the proof of  Theorem \ref{th:gene}.

\begin{proof}[Proof of Theorem \ref{th:gene}]
We suppose that $f(x)$ is smooth on $(a_j,a_{j+1})$ where $1\leq
j\leq M$ and ${\rm supp} f\subset [a_1,a_{M+1}]$. Without loss of
generality, we can suppose that $d_0=0$, and hence $a_1=0$. Let us
define
$$
(\frac{d}{dx})^k f_{-}(a_j)=\lim_{x\rightarrow
a_j-}(\frac{d}{dx})^k f(x),\,\,\,\, (\frac{d}{dx})^k
f_{+}(a_j)=\lim_{x\rightarrow a_j+}(\frac{d}{dx})^k f(x),
$$
and $f_k(a_j)=(\frac{d}{dx})^k f_+(a_j)-(\frac{d}{dx})^k f_-(a_j)$.
We shall prove that either $f_k(a_j)=0$ or $\lim_{x\rightarrow
a_j}(\frac{d}{dx})^kf(x)$ exists for all $a_j$ except when $k=k_0$
for some nonnegative integer $k_0$. Note that ${\rm supp} f\subset
[0,a_{M+1}]$. Then the function $f(x)$ satisfies
\begin{equation}\label{eq:prth1}
f(x)=c_0f(\lambda x-d_0)=c_0f(\lambda x),\,\,\,\, x\in
[0,\varepsilon]
\end{equation}
 for a sufficiently small $\varepsilon>0$.

We claim that there exists a nonnegative integer $k_0$ such that
$f_+^{(k_0)}(0)\neq 0$. For a contradiction, assume that
$f_+^{(k)}(0)=0$ for any nonnegative integer $k$. Note that
$f_-^{(k)}(0)=0$ for each $k \geq 0,$ since $f(x)=0$ for $x<0$. So
$f^{(k)}(x)|_{x=0}$ exists for any nonnegative integer $k$. Since
$f(x)$ is the piecewise smooth function, there exists a positive
number $\varepsilon_1$ such that $f(x)\in {\bf C}^{\infty}
[0,\varepsilon_1)$. According to Lemma \ref{th:regul}, we have
$f(x)\in {\bf C}^{\infty}$. But then Theorem \ref{th:noninf}
implies that $f(x)\equiv 0$, a contradiction.

According to (\ref{eq:prth1}), we have
$f_+^{(k_0)}(0)=c_0\lambda^{k_0}f^{(k_0)}_+(0)$. But
$f^{(k_0)}_+(0)\neq 0,$ so $c_0=\lambda^{-k_0}$. By
(\ref{eq:prth1}), we  conclude that $f(x)=\lambda^{-k_0}f(\lambda
x)$ on $[0,\varepsilon]$. A simple calculation shows that
$$
f^{(k)}_+(0)=\lambda^{k-k_0}f^{(k)}_+(0) \mbox{ for any } k\in
{\bf Z}_+.
$$
Clearly, $\lambda\neq 1$ yields that $f^{(k)}_+(0)=0$ for $k\neq
k_0$. Note that $f^{(k)}_{-}(0)=0$ for any nonnegative integer $k$.
Accordingly, $\lim_{x\rightarrow 0}f^{(k)}(x)$ exists for $k\neq
k_0$. By Lemma \ref{th:regul}, $\lim_{x\rightarrow a_j}f^{(k)}(x)$
exists for all $a_j$ if $k\neq k_0$. Put $g(a_j)=\lim_{x\rightarrow
a_j}f^{(k_0+1)}(x)$ and $g(x)=f^{(k_0+1)}(x)$ for $x\neq a_j$. We
will show that $g(x)$ satisfies the following refinement equation
$$
g(x)=\lambda^{k_0+1}\sum_{j=0}^Nc_jg(\lambda x-d_j).
$$

Fix $x_0\in {\bf R}$. If $\{x_0,\lambda x_0-d_j,j=1, \ldots,
N\}\cap \{a_j, j=1, \ldots, M+1\}=\emptyset$, by taking the
$(k_0+1)$-th derivative at $x_0$ on both sides of
$$
f(x)=\sum_{j=0}^N c_j f(\lambda x-d_j),
$$
we obtain
\begin{equation}
g(x_0)=\lambda^{k_0+1}\sum_{j=0}^Nc_jg(\lambda x_0-d_j).
\end{equation}

Let us consider the remaining case when the intersection of two
sets is non-empty. Without loss of generality, we may suppose that
$$
\{x_0,\lambda x_0-d_j,j=1, \ldots, N\}\cap \{a_j, j=1, \ldots,
M+1\}=\{x_0,\lambda x_0-d_j,j=1,\ldots,N_0\}$$ with an integer
$N_0$.
 Select an
$\varepsilon>0$ such that $$(x_0-\varepsilon,x_0)\cap \{a_j, j=1,
\ldots, M+1\}=\emptyset $$ and $(\lambda x_0-d_k-\lambda
\varepsilon,\lambda x_0-d_k)\cap \{a_j, j=1, \ldots,
M+1\}=\emptyset, \mbox{ for }  k=1, \ldots, N$.

Hence $f(x),f(\lambda x-d_j)\in {\bf C}^\infty
(x_0-\varepsilon,x_0)$, where  $j=1,\ldots,N$. Then, for $x\in
(x_0-\varepsilon,x_0)$, we have
\begin{equation}\label{eq:x0eq1}
g(x)-\lambda^{k_0+1}\sum_{j=0}^{N_0} c_jg(\lambda
x-d_j)=\lambda^{k_0+1}\sum_{j=N_0+1}^Nc_jg(\lambda x-d_j).
\end{equation}
By taking the limits on both sides of (\ref{eq:x0eq1}), and noting
that
$$
\lim_{x\rightarrow x_0-}\lambda^{k_0+1}\sum_{j=N_0+1}^Nc_jg(\lambda
x-d_j)=\lambda^{k_0+1}\sum_{j=N_0+1}^Nc_jg(\lambda x_0-d_j)
$$
and
$$
 \lim_{x\rightarrow x_0-}(g(x)-\lambda^{k_0+1}\sum_{j=0}^{N_0}
c_jg(\lambda x-d_j))=g(x_0)-\lambda^{k_0+1}\sum_{j=0}^{N_0}
c_jg(\lambda x_0-d_j),
$$
we conclude that
$$
g(x_0)=\lambda^{k_0+1}\sum_{j=0}^Nc_jg(\lambda x_0-d_j).
$$
Combining the results above, we arrive to the equality
$$
g(x)=\lambda^{k_0+1}\sum_{j=0}^Nc_jg(\lambda x-d_j), {\,\, for\,\,
all\,\, } x\in {\bf R}.
$$

Next, by Theorem \ref{th:noninf}, we obtain that $g(x)\equiv 0$,
since the function $g\in {\bf C}^{\infty}$ is compactly supported.
Using $f^{(k_0+1)}|_{(a_j,a_{j+1})}(x)=g|_{(a_j,a_{j+1})}(x)\equiv
0$, we conclude that $f(x)$ is a spline function.
\end{proof}

Now, we begin the proof of Proposition \ref{pr:exam}. For this, we
shall give some definitions (see also \cite{daispline}) and a
lemma. The functions of the form $G(w)=\sum_{j=0}^N a_je^{-2\pi i
b_jw}$ are referred to as {\it quasi-trigonometric polynomials},
where $a_j\in {\bf C},a_j\neq 0$ and $b_j\in {\bf R}$ such that
$b_0<b_1<\cdots <b_N$. In case $b_j \in {\bf Z},$ such polynomials
are simply trigonometric polynomials. If $b_0=0$, $G(w)$ is called
a {\it normalized quasi-trigonometric polynomial}. For the
quasi-trigonometric polynomial $G(w)$, one can write
\begin{equation}\label{eq:deom}
G(w)=e^{-2\pi i r_1 w}G_1(w)+e^{-2\pi i r_2 w}G_2(w)+\cdots
+e^{-2\pi i r_l w}G_l(w),
\end{equation}
where each $G_j$ is a trigonometric polynomial and $0\leq
r_1<\cdots < r_l<1$ are distinct. It is easy to see that up to a
permutation of terms this decomposition is unique and we shall
call (\ref{eq:deom}) the {\it standard decomposition} of $G(w)$.
Moreover, as in \cite{daispline}, we write ${\bf
A}_G(w):=\sum_{j=0}^H c_j e^{-2\pi i k_j w}$ for the greatest
common divisor of the trigonometric polynomials $\{G_j\}$
normalized so that $k_0=0,c_0=1$ and $k_j\geq 0$ are distinct. It
is easy to see that ${\bf A}_G(w)=\alpha e^{-2\pi i j w} G(w)$ for
some constant $\alpha$ and an integer $j$, if and only if, $G(w)$
is a trigonometric polynomial. Then we have

\begin{lemma}\label{le:factor}
Suppose that two normalized quasi-trigonometric polynomials $G_1$
and $G_2$ satisfy the equality $G_1(w)G_2(w)=R(w)(1-e^{-2\pi
iw})$, where $R(w)=1+\sum_{j=1}^Na_je^{-2\pi i b_j w}$ and
$0<b_1<b_2<\cdots<b_n$ are  irrational numbers. Then there exists
a positive integer $P$ and a set $S_0\subset S_1:=\{0,1,\ldots,
P-1\}$ such that $G_1(w)=\alpha(w) \prod_{j\in S_0} (e^{-2\pi i
j/P}-e^{-2\pi i w/P})$ and $G_2(w)=\beta(w)\prod_{j\in
S_1\setminus S_0} (e^{-2\pi i j/P}-e^{-2\pi i w/P})$, where
$\alpha$, $\beta$ are some quasi-trigonometric polynomials.
\end{lemma}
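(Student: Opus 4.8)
The plan is to exploit the rigidity forced by the irrationality of the $b_j$ in $R(w)$. First I would pass to the standard decompositions of $G_1$, $G_2$ and of $R(w)(1-e^{-2\pi iw})$. Since $G_1 G_2 = R(w)(1-e^{-2\pi iw})$ and the frequencies appearing in $R$ are all irrational while the frequency in $1-e^{-2\pi iw}$ is integral, the set of exponents on the right is highly structured: it is $\{0,1\}$ plus $\{b_1,\dots,b_n\}$ plus $\{b_1+1,\dots,b_n+1\}$, and because the $b_j$ are irrational these three blocks occupy distinct residue classes modulo $1$ (the blocks $\{0\}\cup\{1\}$ sit in the class of $0$, each $b_j$ and $b_j+1$ in the class of $\{b_j\}$). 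The key structural fact I would extract is: in any factorization $G_1G_2$ of such a product, every exponent that occurs in $G_1$ or in $G_2$ must be a nonnegative integer combination drawn from the fractional parts $\{0\},\{b_1\},\dots,\{b_n\}$, and — because multiplying two quasi-trigonometric polynomials adds frequencies — the fractional parts occurring in $G_1$ form a subset $T_1$ of $\{0,b_1,\dots,b_n\}\bmod 1$ and those in $G_2$ a subset $T_2$, with $T_1+T_2$ (as a sumset of residues) landing inside $\{0,b_1,\dots,b_n\}\bmod 1$. Since $G_1,G_2$ are \emph{normalized} ($b_0=0$, so $0\in T_1\cap T_2$), this forces $T_1\cup T_2\subseteq\{0,b_1,\dots,b_n\}\bmod 1$ and, crucially, any sum of a nonzero element of $T_1$ with a nonzero element of $T_2$ must still be among the $b_j$ — a strong closure condition.

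Next I would argue that this closure condition, together with the requirement that the product's frequency multiset be exactly that of $R(w)(1-e^{-2\pi iw})$, forces all the $b_j$ to be rational multiples of a single irrational $\theta$, hence (after rescaling $w$) forces $G_1$ and $G_2$ to be honest trigonometric polynomials in a rescaled variable. Concretely: look at the class of $0$. On the right-hand side the $0$-class part is $1-e^{-2\pi iw}$. The $0$-class contribution to $G_1G_2$ is the product of the $0$-class parts of $G_1$ and $G_2$ (the only way to get a $0$-class frequency from adding two frequencies whose fractional parts already lie in $\{0,b_j\}$ is $0+0$, \emph{unless} some $b_i+b_j\equiv 0\pmod 1$; I would first handle the generic case where no such coincidence occurs, then observe that coincidences $\{b_i\}+\{b_j\}\in\mathbf Z$ only further constrain things and can be absorbed). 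Thus the $0$-class parts of $G_1,G_2$ are two monic trigonometric polynomials whose product is $1-e^{-2\pi iw}$, so one of them is $1$ and the other is $1-e^{-2\pi iw}$, or they split the single factor $1-e^{-2\pi iw}=\prod_{j=0}^{P-1}(1-e^{-2\pi i(w-j)/P}) \cdot$(unit) after adjoining a common refinement $P$. This is where the denominator $P$ and the sets $S_0\subseteq S_1=\{0,\dots,P-1\}$ enter: writing $1-e^{-2\pi i w}$ in the variable $u=e^{-2\pi i w/P}$ as $\prod_{j\in S_1}(e^{-2\pi i j/P}-u)$ up to a scalar, any monic factorization over trigonometric polynomials in $u$ corresponds to a partition $S_1=S_0\sqcup(S_1\setminus S_0)$.

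Then I would close the loop by showing the \emph{irrational} part of the frequency sets must be empty — i.e. $n=0$ effectively, or rather the $b_j$-classes contribute nothing nontrivial to $G_1$ and $G_2$ individually. The point is that if, say, $G_1$ had a term with fractional part $\{b_i\}\ne 0$, then multiplying by the term of $G_2$ with fractional part $0$ (which exists, by normalization) produces a frequency in $G_1G_2$ with fractional part $\{b_i\}$ and integer part possibly $0$; but on the right-hand side every frequency with fractional part $\{b_i\}$ equals either $b_i$ or $b_i+1$, which pins down the integer part. Running this matching across all products of a $b_i$-term of $G_1$ with the $0$-terms of $G_2$ (and vice versa) and comparing coefficients, one finds that either the $b_i$-part of $G_1$ or that of $G_2$ must be trivial; since this holds for each $i$, after possibly swapping names the entire irrational part is carried by $R(w)$ alone and $G_1,G_2$ depend only on the $0$-class, i.e. are trigonometric polynomials in $u=e^{-2\pi i w/P}$. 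Combined with the previous paragraph, this yields exactly the claimed product formulas, with $\alpha,\beta$ the (quasi-trigonometric) leftover factors.

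The main obstacle I anticipate is the bookkeeping around the coincidences $\{b_i\}+\{b_j\}\equiv 0$ or $\equiv\{b_k\}\pmod 1$: a priori the $b_j$ need not be linearly independent over $\mathbf Q$, so the sumset-closure argument must be made robust to $\mathbf Q$-linear relations among the $b_j$. The clean way around this is probably to work in the finitely generated subgroup $\Gamma\subset\mathbf R/\mathbf Z$ generated by the fractional parts $\{b_1\},\dots,\{b_n\}$, note that the supports of $G_1,G_2$ project to subsets of $\Gamma$, use that the support of the product projects to the sumset of the projected supports, and invoke the fact that a finite monic ``polynomial'' over the group algebra of $\Gamma\cong\mathbf Z^a\times(\text{finite})$ that factors $R(w)(1-e^{-2\pi iw})$ must respect the grading by $\Gamma$ — reducing everything to the torsion/integer component, which is exactly the $u=e^{-2\pi i w/P}$ picture. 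Handling this grading argument carefully, rather than the final extraction of $P$ and $S_0$, is where the real work lies.
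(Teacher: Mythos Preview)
Your approach has a genuine gap. You assert that the fractional parts of the frequencies in $G_1$ (and in $G_2$) must already lie in $\{0\}\cup\{\{b_1\},\dots,\{b_n\}\}$, and eventually that each $G_k$ is a trigonometric polynomial in $u=e^{-2\pi i w/P}$ with the irrational content carried entirely by $R$. Both assertions are false. For the second, the factorization $G_1=R$, $G_2=1-e^{-2\pi i w}$ satisfies all hypotheses and the lemma's conclusion (take $P=1$, $S_0=\emptyset$, $\alpha=R$), yet $G_1$ is not a trigonometric polynomial in any $e^{-2\pi i w/P}$. For the first, nothing prevents $G_1,G_2$ from carrying \emph{rational non-integer} frequencies that cancel in the product: with $R\equiv 1$ one has $(1+e^{-\pi i w})(1-e^{-\pi i w})=1-e^{-2\pi i w}$, both factors have frequency set $\{0,\tfrac12\}$, the fractional part $\tfrac12$ is not among the (nonexistent) $b_j$, and the ``$0$-class'' parts in your sense are the constants $1$ and $1$, whose product is not $1-e^{-2\pi i w}$. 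Your sumset argument in the group $\Gamma=\langle\{b_j\}\rangle\subset{\bf R}/{\bf Z}$ never sees such rational frequencies, and you have no mechanism for choosing $P$.

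The paper handles precisely this by selecting $P$ from the frequencies of $G_1,G_2$ themselves: one chooses $P$ so that $P$ times every frequency of $G_1$ and $G_2$, and every pairwise difference of such frequencies, is either an integer or irrational. After rescaling $w\mapsto Pw$, the standard decomposition of each $G_k(Pw)$ has residues $r_j$ that are $0$ or irrational with pairwise irrational differences. One then works with ${\bf A}_{G_{kP}}$, the gcd of the trigonometric components, and proves ${\bf A}_{G_{1P}}\cdot{\bf A}_{G_{2P}}=1-e^{-2\pi i P w}$ by evaluating at $j_0/P+I$ for $I\in{\bf Z}$ and using a generalized Vandermonde argument: if $G_{1P}(j_0/P+I)=0$ for infinitely many integers $I$, nonsingularity of the matrix $(e^{-2\pi i r_k I_j})$ forces every component $Q_k(j_0/P)=0$, hence ${\bf A}_{G_{1P}}(j_0/P)=0$. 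This factorization of $1-z^P$ over ${\bf C}[z]$ yields $S_0$, and the divisibility ${\bf A}_{G_{kP}}\mid G_k(Pw)$ produces the quasi-trigonometric quotients $\alpha,\beta$, which need not be trigonometric.
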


\begin{proof} Set $G_{P}(w):=R(Pw)(1-e^{-2\pi iPw})$,  $G_{1P}(w):=G_1(Pw)$ and $G_{2P}(w):=G_2(Pw)$
for any integer $P$. Note that ${\bf A}_{G_{P}}(w)=1-e^{-2\pi
iPw}$.
 We claim that the conclusion holds if there exists a positive integer
$P$ such that
\begin{equation}\label{eq:proof1}
 {\bf A}_{G_{1P}}(w){\bf A}_{G_{2P}}(w)=1-e^{-2\pi i Pw}.
\end{equation}

Indeed, on both sides of (\ref{eq:proof1}) we have trigonometric
polynomials. Setting $z=e^{-2\pi iw}$, we see that $1-z^P$ is a
product of two polynomials in $z$. So  there exists a set
$S_0\subset \{0,1,\ldots,P-1\} $ such that ${\bf
A}_{G_{1P}}(w)=\prod_{j\in S_0}(e^{-2\pi i j}-e^{-2\pi i w})$,
which implies $G_1(w)=\prod_{j\in S_0}(e^{-2\pi i j/P}-e^{-2\pi i
w/P})\alpha(w)$, where $\alpha(w)$ is a quasi-trigonometric
polynomial. Similarly, we have $G_2(w)=\prod_{j\in S_1\setminus
S_0}(e^{-2\pi i j/P}-e^{-2\pi i w/P})\beta(w)$, where $\beta(w)$
is a quasi-trigonometric polynomial. The claim follows
immediately. Hence, in order to complete the proof, it suffices to
show that there exists a positive integer $P$ such that ${\bf
A}_{G_{1P}}(w){\bf A}_{G_{2P}}(w)=1-e^{-2\pi i Pw}$.

Write $G_1(w)=\sum_{j=0}^{N_1} a_{1j}e^{-2\pi i b_{1j}w}$ and
$G_2(w)=\sum_{j=0}^{N_2} a_{2j}e^{-2\pi i b_{2j}w}$. Then there
exists a positive integer, say, $P$ such that each element in the
set
$$\{Pb_{1j}, Pb_{2k}, P(b_{1j}-b_{1j'}),
P(b_{2k}-b_{2k'}) \>|\> 0 \leq j, j' \leq N_1, \> 0 \leq k, k'
\leq N_2\}$$ is either integer or irrational. Write
\begin{equation}\label{eq:fact}
R(w)(1-e^{-2\pi  i Pw})=G_{1P}(w)G_{2P}(w).
\end{equation}
We claim that for each
 $w_0$ satisfying $1-e^{-2\pi iPw_0}=0$, one has
 $${\bf A}_{G_{1P}}(w_0){\bf
 A}_{G_{2P}}(w_0)=0.$$
 Suppose
 that there is an element $w_0\in \{w|1-e^{-2\pi i Pw}=0\}$
 such that neither ${\bf A}_{G_{1P}}(w_0)$ nor ${\bf
 A}_{G_{2P}}(w_0)$ is zero. Write $w_0$ in the form
 $j_0/P+I_0$, where $0\leq j_0<P$,  $j_0,I_0\in {\bf Z}$.
 Then ${\bf A}_{G_{1P}}(j_0/P+I) \ne 0$ and ${\bf
 A}_{G_{2P}}(j_0/P+I) \ne 0$ for every
$I\in {\bf Z}$.

On the other hand, for each $I\in {\bf Z}$, one has
$G_{1P}(j_0/P+I)G_{2P}(j_0/P+I)=0$ (see (\ref{eq:fact})). It
follows that there exists an infinite set ${\bf Z}_0\subset {\bf
Z}$, such that $G_{1P}(j_0/P+I)=0$  for each $I\in {\bf Z}_0$
(otherwise, one can replace $G_{1P}$ by $G_{2P}$). According to
the choice of $P$, we have the following decomposition
$$
G_{1P}(w)=G_1(Pw)=e^{-2\pi i r_1 w}Q_1(w)+e^{-2\pi i r_2
w}Q_2(w)+\cdots +e^{-2\pi i r_l w}Q_l(w),
$$
where $0\leq r_1<\ldots <r_l$ are all irrational numbers (except
perhaps for $r_1=0$) and $Q_j(w)$ are trigonometric polynomials.
Moreover, by the choice of $P$ all the differences $r_i-r_j$ are
also irrational. Substituting $j_0/P+I$ with $I\in {\bf Z}_0$ into
$G_{1P}(w)$ we have
\begin{equation}\label{eq:vand}
G_{1P}(j_0/P+I)=\sum_{k=1}^l e^{-2\pi i r_k I} e^{-2\pi i
r_kj_0/P}Q_k(j_0/P)=\sum_{k=1}^l e^{-2\pi i r_k I} V_k=0,
\end{equation}
where $V_k=e^{-2\pi i r_kj_0/P}Q_k(j_0/P)$ are all independent of
$I$. Suppose that $I_1,\ldots, I_l$ are $l$ distinct elements of
${\bf Z}_0 $. (As ${\bf Z}_0 $ is infinite, one can find in it $l$
distinct elements.) Let $M$ be the $l\times l$ generalized
Vandermonde matrix
$$
M=(e^{-2\pi ir_k I_j})_{1\leq k,j\leq l}.
$$
Note that all the $r_k,$ where $k \geq 2,$ are irrational and all
the differences $r_k-r_j$ are irrational, so the matrix $M$ is
non-singular. Taking $I=I_1,\ldots, I_l$ in (\ref{eq:vand}), we
obtain $M{\bf V}=0$, where ${\bf V}:=(V_1,\ldots,V_l)^T$. It
follows that all $V_j=0$ and thus $Q_k(j_0/P)=0$ for every $k$.
Hence ${\bf A}_{G_{1P}}(j_0/P+I)=0$ for each $I\in {\bf Z}$, which
contradicts to ${\bf A}_{G_{1P}}(w_0){\bf A}_{G_{2P}}(w_0)\neq 0$
and proves the claim.

Moreover, for each $w_0$ such that ${\bf A}_{G_{1P}}(w_0){\bf
 A}_{G_{2P}}(w_0)=0$, one has $1-e^{-2\pi i P w_0}=0$. Indeed, if
$1-e^{-2\pi i P w_0}\neq 0$, then $1-e^{-2\pi i P (w_0+I)}\neq 0$
for any $I\in {\bf Z}$. So, $R(w_0+I)=0$ for each $I\in {\bf Z}$.
By a similar method, we can show that $R(w)\equiv 0$, which
contradicts to the definition of $R(w)$. Similarly, one can show
that the roots of ${\bf A}_{G_{1P}}(w){\bf
 A}_{G_{2P}}(w)=0$ are of multiplicity $1$.   Hence we have ${\bf
A}_{G_{1P}}(w){\bf
 A}_{G_{2P}}(w)=1-e^{-2\pi i P w}$. The lemma follows.
 \end{proof}

\begin{proof}[Proof of Proposition \ref{pr:exam}]
 Suppose $B(x|(1,\sqrt{5/2}))=f_1(x)*f_2(x)$, where $f_1(x)$ and
$f_2(x)$ are splines. By  the Fourier transform, we have
\begin{equation}\label{eq:decomp}
\widehat{f_1}(w)\widehat{f_2}(w)=\frac{1-e^{-2\pi i w}}{2\pi i w}
\frac{1-e^{-2\pi\sqrt{{5}/{2}} i w}}{2\pi i\sqrt{{5}/{2}} w}.
\end{equation}
By Corollary 2.2 of \cite{daispline}, $\widehat{f_j}(w)$ has the
form of $p_j(w)/w$, where $p_j(w)$ is a quasi-trigonometric
polynomial with $p_j(0)=0$ for $j=1,2$. Then we have
$$
p_1(w)p_2(w)=R(w)(1-e^{-2\pi i w}),
$$
where $R(w):=\frac{1-e^{-2\pi \sqrt{5/2}iw}}{-4\pi^2 \sqrt{5/2}}$.
 Lemma \ref{le:factor} with $S_0$ (or $S_1\setminus S_0$) containing $0$
 implies that there exists an integer $P_1$
such that either $p_1(w)$ (or $p_2(w)$) is of the form of
$(1-e^{-2\pi i w/P_1})q_1(w)$, where $q_1(w)$ is a
quasi-trigonometrical polynomial. Without loss of generality we
may assume that $p_1(w)=(1-e^{-2\pi i w/P_1})q_1(w)$. Note that
$w=0$ is a root of $p_1(w)$ and $p_2(w)$ of multiplicity $1$.
Hence, by a similar method as above applied to $w'=\sqrt{5/2} w$,
we obtain that $p_2(w)$ is of the form
 $p_2(w)=(1-e^{-2\pi i\sqrt{5/2} w/P_2})q_2(w)$, where $P_2$ is
a positive integer and $q_2$ is a quasi-trigonometrical
polynomial.

Set $Z(f):=\{w|\,\,f(w)=0, w\in {\bf C}\}$ and ${Z}'(f):=\{w|\,\,
f(\sqrt{10} w)=0, w\in {\bf C}\}$. We claim $Z(p_1)\setminus
{Z}'(p_1)\neq \emptyset$, i.e., there exists $w_0\in {\bf C}$ such
that $p_1(w_0)=0$ but $p_1(\sqrt{10} w_0)\neq 0$. To prove this,
note that
$$Z(p_1/q_1)=\{I_1P_1|\,\, I_1\in
{\bf Z}\}$$
 and $${Z}'(p_1q_2)=\{I_2/\sqrt{10},\,\, I_3P_2/5+k/5|\,\, I_2,
I_3 \in {\bf Z}, 1\leq k\leq P_2-1 \}.$$
 We can see that
$Z(p_1/q_1)\setminus {Z}'(p_1q_2)\neq \emptyset$. Since
$Z(p_1/q_1)\subset Z(p_1)$ and ${Z}'(p_1)\subset {Z}'(p_1q_2)$, we
derive that $Z(p_1)\setminus {Z}'(p_1)\neq \emptyset$, where
$(Z(p_1/q_1)\setminus {Z}'(p_1q_2))\subset (Z(p_1)\setminus
{Z}'(p_1))$.

Hence there exists a $w_0$ such that $p_1(w_0)=0$ while
$p_1(\sqrt{10} w_0)\neq 0$. As a result,  we see that
$p_1(\sqrt{10} w)/p_1(w)$ cannot be a mask polynomial. Thus, $f_1$
is not $\sqrt{10}$-refinable. The proposition follows.
\end{proof}

\section{Proofs of Theorem \ref{th:bsp} and Theorem \ref{th:mulrespline}}

\begin{proof}[Proof of Theorem \ref{th:bsp}]
Let us begin with part (B). Write $A=(m_1,\ldots,m_n)$, where
$m_j\in {\bf R}\setminus 0$.
 We shall prove that, for each element of $A$, for instance, $m_1$ there
exists an element  $m\in A $ and an integer $p$ such that $m=p
m_1/\lambda$.

Consider the Fourier transform
$$\widehat{B}(w|A)=\prod_{j=1}^n
\frac{1-e^{-2\pi i wm_j}}{2\pi i wm_j}.
$$
Since $B(x|A)$ satisfies the refinement  equation, we have
\begin{equation}\label{eq:four}
\widehat{B}(\lambda w|A)=p(w)\widehat{B}( w|A),
\end{equation}
where $p(w)$ is the mask polynomial. Note that
$$p(w)=\frac{\widehat{B}(\lambda
w|A)}{\widehat{B}(w|A)}=\lambda^{-n}\prod_{j=1}^n\frac{Q(\lambda
m_jw)}{Q( m_jw)},
$$
 where $Q(w)={1-e^{-2\pi i w}}$.
Put
$$
Z_{j}:=\{w|\,\, Q(m_jw)=0,w\neq 0\} \mbox{ and } Z_{j}':=\{w|\,\,
Q(\lambda m_j w)=0, w\neq 0\}.
$$
 Since $p(w)$ is an entire function, one has
\begin{equation}\label{eq:zero}
\bigcup_{j=1}^nZ_j \subset \bigcup_{j=1}^nZ_j'.
\end{equation}
A simple calculation shows that
$$Z_j:=\{{I_j}/{m_j}|\,\,  I_j\in {\bf
Z}\setminus 0\} \mbox{ and } Z_j':=\{{k_j}/(m_j\lambda)|\,\,
k_j\in {\bf Z}\setminus 0\}.$$
 Let us consider $Z_1$. Put
$J_1:=\{j|Z_1\cap Z_j'\neq \emptyset\}.$ By (\ref{eq:zero}), we see
that $Z_1\subset \bigcup_{j\in J_1}Z_j'$.  Select an entry in $J_1$,
say, $u$. Since $Z_1\cap Z_{u}'\neq \emptyset$, there exist $I_1\in
{\bf Z}\setminus 0$ and $k_{u}\in {\bf Z}\setminus 0$ such that
${I_1}/{m_1}=k_{u}/(m_{u}{\lambda})$. Hence, we can find two coprime
integers $P_{1u},Q_{1u}$ such that
$m_{u}={P_{1u}m_1}/(Q_{1u}{\lambda})$. Similarly, for any index
$u\in J_1$, we can find two coprime integers $P_{1u},Q_{1u}$ such
that
\begin{equation}\label{eq:mum1}
m_u\,\,=\,\, {P_{1u}m_1}/(Q_{1u}{\lambda}).
\end{equation}
 Note that for each fixed
$I_1\in {\bf Z}\setminus 0$ there exist $u\in J_1,k_u\in {\bf
Z}\setminus 0$ such that
$$
\frac{I_1}{m_1}\,\,=\,\,\frac{k_u}{m_{u}\lambda}\,\,=\,\,\frac{k_u
Q_{1u}}{m_1 P_{1u}},
$$
since $Z_1\subset \bigcup_{j\in J_1}Z_j'$.
 Hence, for any $I_1\in {\bf
Z}\setminus 0$, there exist  $u\in J_1$ and $k_u\in {\bf
Z}\setminus 0$ such that ${k_u}/{I_1}={P_{1u}}/{Q_{1u}},$ i.e.,
$k_u=I_1 {P_{1u}}/{Q_{1u}}$. So, there exists $u\in J_1$ such that
$Q_{1u}|I_1$ for any $I_1\in {\bf Z}\setminus 0$.

We claim that one can find a $u_1\in J_1$ such that
$Q_{1u_{1}}=1$. Indeed, suppose $Q_{1u}\neq 1$ for each $u\in
J_1$. Take $I_1=\prod_{u\in J_1}|Q_{1u}|+1$. But then $I_1$ is not
divisible by $Q_{1u}$ for each $u\in J_1$, a contradiction.

Next, by (\ref{eq:mum1}), there exists an index $u_1$ such that
$$
m_{u_1}=\frac{P_{1u_1}m_1}{\lambda }.
$$
Setting $m:=m_{u_1}$ and $p:=P_{1u_1}$ we complete the proof of
(B).

Consider parts (A) and (C). Using (B), one obtains an infinite
sequence $u_1,u_2,\ldots$, such that
$m_{u_k}={P_{u_ku_{k-1}}m_{u_{k-1}}}/{\lambda }$, for some
$P_{u_ku_{k-1}}\in {\bf Z}$, where $k\geq 2$.
  Since each index in
this sequence is at most $n$, the sequence contains two equal
indices. Without loss of generality, we may suppose that
$u_{l+1}=u_1$, where $l$ is a positive integer. Then
\begin{eqnarray}\label{eq:parta}
 m_{u_2}&=&\frac{1}{\lambda}P_{u_1u_2}m_{u_1},\nonumber\\
 m_{u_3}&=&\frac{1}{\lambda}P_{u_2u_3}m_{u_2},\nonumber\\
m_{u_4}&=&\frac{1}{\lambda}P_{u_3u_4}m_{u_3},\\
&\vdots& \nonumber\\
 m_{u_1}&=&\frac{1}{\lambda}P_{u_lu_1}m_{u_l}.\nonumber
 \end{eqnarray}

This  yields $\lambda^l=P_{u_1u_2}\cdots P_{u_lu_1} \in {\bf Z}$,
proving part (A). Moreover, by using (\ref{eq:parta}), one can see
that $m_{u_2}={m_{u_1} p_1}/{\lambda},m_{u_3}={m_{u_1}
p_2}/{\lambda^2},\ldots,m_{u_l}=p_{l-1}m_{u_1}/{\lambda^{l-1}}$,
where $p_1:=P_{u_1u_2}, p_{2}:=p_1
P_{u_2u_3},\ldots,p_{l-1}:=p_{l-2}P_{u_l,u_{l-1}}$. Hence
$p_j|p_{j+1}$ for
 $j=1,2,\dots,l-2,$ and $p_{l-1}|\lambda^l$. Thus the
sub-vector $(m_{u_1},\ldots,m_{u_l})$ has the form of $(v,vp_1
\lambda^{-1},\ldots,vp_{l-1}\lambda^{-l+1})$, where $v:=m_{u_1}$,
proving (C).

It remains to prove (D). Consider the Fourier transform of
$B(x|A)$
\begin{equation}\label{eq:box1}
\widehat{B}(w|A)=\prod_{j=1}^n\frac{1-e^{-2\pi i m_j w}}{2\pi i
m_j w}.
\end{equation}

Since $B(x|A)$ is a $\lambda$-refinable spline with integer
translations, by Theorem \ref{th:lawton} and Theorem
\ref{th:respline}, we can write $\widehat{B}(w|A)$ in the form
\begin{equation}\label{eq:box2}
\widehat{B}(w|A)=e^{-2\pi i z_0 w}\prod_{j=0}^{k-1} p(e^{2\pi
i\lambda^{j} w})(\frac{1-e^{-2\pi i \lambda^{j} w}}{2\pi i
\lambda^{j} w})^h,
\end{equation}
where $z_0:=d_0(1+\lambda+\dots+\lambda^{k-1})/(\lambda^k-1)$, $h$
is an integer and $p(z)$ is a polynomial.

Set
$$G_1(w):=\prod_{j=0}^{k-1} p(e^{2\pi i\lambda^{j}
w})({1-e^{-2\pi i \lambda^{j} w}})^h$$ and
$$G_2(w):=\prod_{j=1}^n
(1-e^{-2\pi i m_j w}).$$ Comparing (\ref{eq:box1}) and
(\ref{eq:box2}), one gets $G_2(w)=e^{2\pi i \alpha w}G_1(w)$, where
$\alpha$ is a constant. Expanding $\prod_{j=0}^{k-1} p(e^{2\pi
i\lambda^{j} w})({1-e^{-2\pi i \lambda^{j} w}})^h$, we see that each
term has the form $e^{2\pi iw \sum_{j=0}^{k-1} b_j\lambda^j}$ with
$b_j\in {\bf Z}$. Since $\lambda^k>1$ is an integer, by the theorem
of Capelli (see \cite{lang} or p.~92 in \cite{schi}), $\lambda$ is
an algebraic integer of degree $k$. Thus the difference between two
distinct numbers in the form of $\sum_{j=0}^{k-1} b_j\lambda^j$ is
non-integer. We conclude that ${\bf A}_{G_1}(w)=p(e^{2\pi i
w})(1-e^{-2\pi i w})^h$. A simple observation also shows that ${\bf
A}_{G_2}(w)=\prod_{m_j\in {\bf Z}} (1-e^{-2\pi i m_j w})$. Since
${\bf A}_{G_1}(w)={\bf A}_{G_2}(w)$, there are $h$ integer entries
in the vector $A$. Without loss of generality, suppose that
$m_1,\ldots,m_h\in {\bf Z}$. Note that
$$G_1(w)=\prod_{j=0}^{k-1} {\bf A}_{G_1}(\lambda^{j}w)=
\prod_{j=0}^{k-1} \prod_{r=1}^h (1-e^{-2\pi i m_r \lambda^{j}
w}).$$ Then $G_2(w)=e^{2\pi i \alpha w} \prod_{j=0}^{k-1}
\prod_{r=1}^h (1-e^{-2\pi i m_r \lambda^{j} w})$. It follows that
$\alpha =0$, so the vector $A$ can be written as a union of
$(m_r,\lambda m_r,\ldots, \lambda^{k-1} m_r),r=1,\ldots,h$,
proving part (D). \end{proof}

\begin{proof}[Proof of Theorem \ref{th:mulrespline}]
 We begin with (A). Assume that $$M=(m_1,\ldots, m_n)\in {\bf
R}^{s\times n}.$$ It is well-known that the Fourier transform of
$B(x|M)$ is
$$
\widehat{B}(w|M)=\prod_{j=1}^n \frac{1-e^{-2\pi i w^T m_j}}{2\pi i
w^T m_j}.
$$
Since $B(x|M)$ is $\lambda$-refinable, the mask polynomial $H$ is
given by the formulae $H(w)=\widehat{B}(\lambda
w|M)/\widehat{B}(w|M).$

Put $W:=\{w|w^T m_j\neq 0, \mbox{ for all } j \}$. Select a $w_0\in
W$ and consider
\begin{equation}\label{eq:fz}
\widehat{f}(z|w_0):=\widehat{B}(zw_0|M)=\prod_{j=1}^n
\frac{1-e^{-2\pi i zw_0^Tm_j}}{2\pi iz w_0^Tm_j},
\end{equation}
 where $z\in {\bf R}$. Observe
that $\widehat{f}(z|w_0)$ can be considered as the Fourier
transform of the univariate box spline
$B(x|(w_0^Tm_1,\ldots,w_0^Tm_n))$, which is $\lambda$-refinable
for each fixed $w_0\in W$ with the function ${\widehat{f}(\lambda
z|w_0)}/{\widehat{f}(z|w_0)}=H(zw_0)$ being a quasi-trigonometric
polynomial. Then, by Theorem \ref{th:bsp}, there exists a positive
integer $l$ such that $\lambda^l\in {\bf Z}$, proving part (A).

Select an entry in the vector $(w_0^Tm_1,\ldots,w_0^Tm_n)$, for
instance, $w_0^Tm_0$. Since $B(x|(w_0^Tm_1,$ $\ldots,w_0^Tm_n))$
is $\lambda$-refinable, one can find an integer $p_{w_0}$ and an
index $j_{w_0}$ satisfying $2\leq j_{w_0} \leq n$ such that
$w_0^Tm_{j_{w_0}}=p_{w_0}{w_0^Tm_0}/{\lambda}$ for any $w_0\in W$
(see part (B) in Theorem \ref{th:bsp}). Each $w_0$ corresponds to
an index $j_{w_0}$. For an index $j$, set $W(j)=\{w_0\in
W|j_{w_0}=j \}$. Then $\cup_{j=2}^n W(j)=W$. Since the $s$
dimensional Lebesgue outer measure of $W$ is infinite, there
exists a subset of $W$, say, $W_0$ such that, for any $w_0\in
W_0$, the index $j_{w_0}$ is a constant and the $s$ dimensional
Lebesgue outer measure  of $W_0$ is positive. We suppose the
constant  index $j_{w_0}$ is $u_1$. Then
\begin{equation}\label{eq:last}
p_{w_0}\frac{w_0^Tm_0}{\lambda}=w_0^Tm_{u_1}
\end{equation}
 for any $w_0\in W_0$.
We now consider all integers $p_{w_0}$. For each $q \in {\bf Z}$,
set
$$
W_0(q):=\{w_0|p_{w_0}=q,\ \ w_0\in W_0\}.
$$
Then
$$
\bigcup_{q\in {\bf Z}}W_0(q)=W_0.
$$
We claim that there exists a positive integer  $q_1$ such that the
$s$ dimensional  Lebesgue outer measure of $W_0(q_1)$ is positive.
Indeed, the $s$ dimensional Lebesgue outer measure  of $W_0$ is
positive and the set ${\bf Z}$ is countable. Hence there exist $s$
linearly independent elements of $W_0(q_1)$. Let us denote them by
$w_1,\ldots, w_s$. We have
$$
q_1\frac{w_j^Tm_0}{\lambda}=w_j^Tm_{u_1},
$$
for each $1\leq j\leq s$, by (\ref{eq:last}). It follows that
\begin{equation}\label{eq:inde}
w_j^T{\bf V}=0,
\end{equation}
 for each $1\leq j\leq s$ where ${\bf V}:=(q_1\frac{m_0}{\lambda}-m_{u_1})$.
 Let $A$ be the $s\times s$ matrix
 $$
A=(w_j^T)_{1\leq j\leq s}.
 $$
 Then $A$ is non-singular, because $w_j$ are linearly independent.
The equality (\ref{eq:inde}) can be written as $A{\bf V}=0$. It
follows that
 ${\bf V}=0$, because $A$ is non-singular.  Hence
$m_{u_1}={q_1m_0}/{\lambda}$. Putting $m:=m_{u_1},p:=q_1$, we
complete the proof of (B).

Part (C) can be proved by the same method as in the proof of the
part (C) of Theorem \ref{th:bsp}. We omit the details.

It remains to prove (D). Set $W':=W\cap {\bf Z}^s$. By the
definition of $W$, one can see that $W'={\bf Z}^s\setminus
(\cup_{j=1}^nH_j)$, where $H_j:=\{w\in {\bf Z}^s|w^Tm_j=0 \}.$
Then $B(x|(w_0^Tm_1,\ldots,w_0^Tm_n))$ is $\lambda$-refinable with
integer translations for every $w_0\in W'$. By Theorem
\ref{th:bsp}, the vector $(w_0^Tm_1,\ldots,w_0^Tm_n)$ can be
written as
$$
(w_0^Tm_{u_{11}},\ldots,w_0^Tm_{u_{1k}},w_0^Tm_{u_{21}},\ldots,w_0^Tm_{u_{2k}},\ldots,
w_0^Tm_{u_{t1}},\ldots,w_0^Tm_{u_{tk}} ).
$$
Here, for each fixed $r=1,\ldots, t$, one has
$w_0^Tm_{u_{r,h+1}}=\lambda w_0^Tm_{u_{r,h}}$, where
$h=1,\ldots,k-1$ and $t:=n/k$ is an integer. Hence, each $w_0\in
W'$ corresponds to the index vector
$P(w_0):=(u_{11},\ldots,u_{1k},\ldots, u_{t1},\ldots, u_{tk})$,
which is a permutation of $(1,\ldots,n)$.

Let us denote the set consisting of all permutations of
$(1,\ldots,n)$ by ${\bf P}$. For each $p\in {\bf P}$ put
$$
W'(p):=\{w_0\in W'|P(w_0)=p\}.
$$
We claim that there exists a $p_0\in {\bf P}$ such that ${\rm
span}(W'(p_0))={\bf R}^s$, i.e., that there are $s$ linearly
independent vectors in $W'(p_0)$. For a contradiction, assume that
${\rm span}(W'(p_0))$ is contained in a $(s-1)$-hyperplane for every
$p_0\in {\bf P}$. Note that
$$
\cup_{p\in {\bf P}}W'(p)=W'={\bf Z}^s\setminus (\cup_{j=1}^nH_j).
$$
Then
\begin{equation}\label{eq:fin}
{\bf Z}^s=(\cup_{p\in {\bf P}}W'(p))\cup(\cup_{j=1}^nH_j).
\end{equation}
 Since
$\# {\bf P}$ is finite, the equation (\ref{eq:fin}) shows that ${\bf
Z}^s$ can be written as a finite union of  hyperplanes, yielding a
contradiction.

Without loss of generality, we may suppose that ${\rm
span}(W'(p_0))={\bf R}^s$ for $p_0=(u_{11},\ldots,u_{1k},\ldots,
u_{t1},\ldots, u_{tk})$. Then, one can select $s$ linearly
independent vectors in $W'(p_0)$, say $w_1,\ldots,w_s$, such that
for each fixed $1\leq r\leq t$ and $1 \leq j\leq s$, one has
$w_j^Tm_{u_{r,h+1}}=\lambda w_j^Tm_{u_{r,h}}$, where $1\leq h\leq
k-1$. Hence, for fixed $r$ and $h$, we obtain the following linear
equations
$$w_j^Tm_{u_{r,h+1}}=w_j^T\lambda m_{u_{r,h}},$$
where $j=1,\ldots,s$. Solving these linear equations, we get
$m_{u_{r,h+1}}=\lambda m_{u_{r,h}}$,
 where $r=1,\ldots, t$ and $h=1,\ldots,k-1$.

Hence the matrices $(m_{u_{r1}},\ldots, m_{u_{rk}}), 1\leq r\leq
t,$ are of the form $(m_0,\lambda m_0,\ldots,$ $\lambda^{k-1}
m_0)$ with $m_0:=m_{u_{r1}}$. Consequently, the matrix $M$ can be
written as a union of $t$ matrices of the same form.\end{proof}

\medskip

{\bf Acknowledgements.} We thank both referees whose remarks
improved the readability of the paper. The research of the first
named author was supported by the Lithuanian Foundation of Studies
and Science. The second named author was supported by the National
Natural Science Foundation of China (10401021).



\end{document}